\theoremstyle{plain}
\newtheorem{theo}{Theorem}[section]
\newtheorem{lemm}[theo]{Lemma}
\newtheorem{coro}[theo]{Corollary}
\newtheorem{prop}[theo]{Proposition}
\theoremstyle{definition}
\newtheorem{defi}[theo]{Definition}
\newtheorem{remk}[theo]{Remark}
\newtheorem{expl}[theo]{Example}
\DeclareMathOperator{\id}{Id}
\newcommand{\rhod}{\rho_{\mathrm d}}
\newcommand{\rhop}{\rho_{\mathrm p}}
\DeclarePairedDelimiter\ceil{\lceil}{\rceil}
\DeclarePairedDelimiter\abs{\lvert}{\rvert}
\DeclarePairedDelimiter\norm{\lVert}{\rVert}
\numberwithin{equation}{section}
\let\originalleft\left
\let\originalright\right
\renewcommand{\left}{\mathopen{}\mathclose\bgroup\originalleft}
\renewcommand{\right}{\aftergroup\egroup\originalright}
\begin{document}

\setlist[enumerate]{label={\textnormal{(\alph*)}}, ref={(\alph*)}, leftmargin=0pt, itemindent=*}

\title{On the gap between deterministic and probabilistic joint spectral radii for discrete-time linear systems\thanks{This research was partially supported by the iCODE Institute, research project of the IDEX Paris-Saclay. The second author was partially supported by the public grant number ANR-10-CAMP-0151-02-FMJH as part of the ``Programme des Investissements d'Avenir''.}}
\author{Yacine Chitour\thanks{Universit\'e Paris-Saclay, CNRS, CentraleSup\'elec, Laboratoire des signaux et syst\`emes, 91190, Gif-sur-Yvette, France.}, Guilherme Mazanti\footnotemark[\value{footnote}]~\thanks{Inria,  France.}, Mario Sigalotti\footnotemark[\value{footnote}]~\thanks{
Sorbonne Universit\'e, Universit\'e de Paris, CNRS, Inria,  Laboratoire Jacques-Louis Lions, 75005 Paris, France.}}

\maketitle

\hypersetup{pdftitle={On the gap between deterministic and probabilistic joint spectral radii for discrete-time linear systems}, pdfauthor={Yacine Chitour, Guilherme Mazanti, Mario Sigalotti}}

\begin{abstract}
Given a discrete-time linear switched system $\Sigma(\mathcal A)$ associated with
a finite set $\mathcal A$ of matrices, we consider 
the measures of its asymptotic behavior given by, on the one hand, its deterministic joint spectral radius $\rhod(\mathcal A)$ and, on the other hand, 
its probabilistic joint spectral radii $\rhop(\nu,P,\mathcal A)$ for Markov random switching signals with transition matrix $P$ and a corresponding invariant probability $\nu$. Note that $\rhod(\mathcal A)$ is larger than or equal to $\rhop(\nu,P,\mathcal A)$ for every pair $(\nu, P)$. In this paper, we investigate the cases of equality of $\rhod(\mathcal A)$ with either a single $\rhop(\nu,P,\mathcal A)$ or with the supremum of $\rhop(\nu,P,\mathcal A)$ over $(\nu,P)$ and we aim at characterizing the sets $\mathcal A$ for which such equalities may occur.
\end{abstract}

{\small \noindent \textbf{Keywords.} Linear switched systems, discrete time, joint spectral radius, Markov process.}

{\small \noindent \textbf{2020 Mathematics Subject Classification.} 93C30, 93C55, 37H15.}

\medskip

{\small \noindent \textcolor{blue}{\emph{This paper was first published in \emph{Linear Algebra and its Applications}, 613:24–45, 2021. With respect to the published version, this version provides an additional remark (Remark~\ref{remk:Lyap-exp}) and a more precise proof of Lemma~\ref{LemmMorris}. All modifications with respect to the published version are given in blue. The authors are very grateful to Matteo Della Rossa for pointing out the imprecisions in the previous version of the proof of Lemma~\ref{LemmMorris}.}}}

\section{Introduction}

In this paper, we consider discrete-time switched linear systems of the form
\begin{equation}
\label{SwitchedSystem}
\Sigma(\mathcal A): \qquad x_{k+1} = A_{\sigma(k)} x_k, \qquad \sigma \in \mathfrak S,\quad k \in \mathbb N,
\end{equation}
where $d$ and $N$ are positive integers, $x_k \in \mathbb R^d$, $\mathfrak S$ is the set of the set of all maps $\sigma: \mathbb N \to \{1, \dotsc, N\}$, and $\mathcal A = (A_1, \dotsc, A_N)$ is an $N$-tuple of $d \times d$ matrices with real coefficients.

Switched systems model the behavior of a continuous variable $x$ whose dynamics may change over time according to the value of a discrete variable $\sigma$. These models are useful for several applications, ranging from air traffic control, electronic circuits, and automotive engines to chemical processes and population models in biology. This wide field of applications, together with the interesting mathematical questions arising from their analysis, justify the extensive literature on switched systems, which have been studied from the point of view of both deterministic and random switching \cite{Shorten2007Stability, Liberzon2003Switching, Sun2005Switched, Lin2009Stability, Costa2013Continuous, Costa2005Discrete}. A commonly used point of view on the switching signal $\sigma$, which we adopt in this paper, is to consider it as an uncertainty or perturbation acting on the system, the goal being thus to provide properties of the system independent of a particular choice of $\sigma$.

We are interested in describing the asymptotic behavior of $\Sigma(\mathcal A)$. For a given $\sigma \in \mathfrak S$, the asymptotic behavior of the corresponding non-autonomous linear system is measured by the quantity $\rho(\sigma)$ defined by
\begin{equation}
\label{eq:intro-rho-sigma}
\rho(\sigma) = \limsup_{n \to \infty} \norm{A_{\sigma(n)} \dotsm A_{\sigma(1)}}^{1/n}.
\end{equation}
Indeed, $\rho(\sigma) < 1$ if and only if all trajectories of the non-autonomous system $x_{k+1} = A_{\sigma(k)} x_k$ converge exponentially to the origin.

In order to capture the asymptotic behavior of $\Sigma(\mathcal A)$, we must formulate some condition which is independent of the choice of $\sigma \in \mathfrak S$. There exist two main approaches to proceed. The first one is deterministic and consists in considering the \emph{joint spectral radius} $\rhod(\mathcal A)$ of $\mathcal A$, defined as the supremum of $\rho(\sigma)$ over all $\sigma \in \mathfrak S$. Since its introduction in \cite{RotaStrang1960} and after the seminal paper \cite{Daubechies1992Sets}, it has been extensively studied in the computer science and control theory communities (see, e.g., the monograph \cite{Jungers2009Joint}).

The other approach to handle the asymptotic behavior of $\Sigma(\mathcal A)$ is probabilistic and amounts to considering a probability measure $\mu$ on $\mathfrak S$ and hence $\sigma \mapsto \rho(\sigma)$ as a random variable. We may then consider as a \emph{probabilistic joint spectral radius} the expected value of $\rho(\sigma)$ with respect to the probability law $\mu$, which we denote by $\rhop(\mu, \mathcal A)$. There exists a vast literature devoted to the properties of products of random matrices, and we refer the reader to \cite{Arnold1998Random, Colonius2014Dynamical, Crisanti1993Products} for more details. A major result in this field has been obtained in \cite{Furstenberg1960Products} and provides general conditions on $\mu$ under which $\rho(\sigma) = \rhop(\mu, \mathcal A)$ on a set of $\mu$ probability~$1$.

The interest in considering $\rhod(\mathcal A)$ and $\rhop(\mu, \mathcal A)$ comes from the stability analysis of \eqref{SwitchedSystem}. Indeed, $\rhod(\mathcal A) < 1$ if and only if \eqref{SwitchedSystem} is uniformly exponentially stable \cite{Jungers2009Joint}, whereas, under the conditions of \cite{Furstenberg1960Products}, $\rhop(\mu, \mathcal A) < 1$ if and only if $\mu$-almost every trajectory of \eqref{SwitchedSystem} converges exponentially to the origin.

In this paper, we aim at understanding the relations between the deterministic and the probabilistic approaches. The deterministic measure of stability $\rhod(\mathcal A)$ characterizes the worst possible behavior over all $\sigma \in \mathfrak S$, while the probabilistic counterpart $\rhop(\mu, \mathcal A)$ provides the average behavior for $\sigma \in \mathfrak S$ corresponding to the probability measure $\mu$. As a consequence, the deterministic approach provides a more conservative estimate of the asymptotic behavior of the system than the probabilistic one, in the sense that
\begin{equation}
\label{IntroInequality-NuP}
\rhop(\mu, \mathcal A) \leq \rhod(\mathcal A).
\end{equation}

A natural question is then to understand under which conditions on $\mathcal A$ and $\mu$ the inequality in \eqref{IntroInequality-NuP} is strict. Furthermore, for practical and modeling purposes, it is important to understand whether, given a family of probability measures $\{\mu_\ell\}_{\ell \in \mathcal I}$, the strict inequality $\sup_{\ell \in \mathcal I} \rhop(\mu_\ell, \mathcal A) < \rhod(\mathcal A)$ holds true. Regarding the first question, it is known that there always exists a measure $\mu$ such that equality holds in \eqref{IntroInequality-NuP} (see, for instance, \cite{Morris2013Mather}, where such measures are referred to as \emph{maximizing measures}). At such a level of generality, a handy characterization of maximizing measures cannot be expected. This is why we restrict our attention to the family $\mathfrak M$ of probability measures on $\mathfrak S$ obtained from discrete-time shift-invariant Markov chains %
% on the state space $\{1, \dotsc, N\}$
%with a transition matrix $P$ and a corresponding invariant probability $\nu$
%
and reformulate the previous two questions as follows: under which conditions on $\mathcal A$ do we have
\begin{enumerate}[label={(Q\arabic*)}]
\item\label{Question1} equality between $\rhop(\mu, \mathcal A)$ and $\rhod(\mathcal A)$ for a given $\mu \in \mathfrak M$?
\item\label{Question2} equality between $\sup_{\mu \in \mathfrak M} \rhop(\mu, \mathcal A)$ and $\rhod(\mathcal A)$?
%\item Under which conditions on $\mu \in \mathfrak M$ and $\mathcal A$ does equality hold in \eqref{IntroInequality-NuP}?
%\item Under which conditions on $\mathcal A$ is $\sup_{\mu \in \mathfrak M} \rhop(\mu, \mathcal A)$ equal to $\rhod(\mathcal A)$?
\end{enumerate}
%Assume that $\sigma$ is generated by a discrete-time Markov chain on the state space $\{1, \dotsc, N\}$ with a transition matrix $P$ and a corresponding invariant probability $\nu$. Under which conditions on $\mathcal A$ does equality hold in \eqref{IntroInequality-NuP} for a given $\mu = (\nu, P)$ or for some $\mu = (\nu, P)$, where $(\nu, P)$ define a Markov chain?

Notice that  the condition $\sup_{\mu \in \mathfrak M} \rhop(\mu, \mathcal A) < 1$ is related to the almost sure stability of the system uniformly with respect to the Markov process, a stability property first considered in \cite{Gurvits1995Stability} in the case of Markov chains with positive transition probabilities. Other stability notions have also been considered for \eqref{SwitchedSystem}, such as periodic stability, meaning stability for all \emph{periodic} signals $\sigma \in \mathfrak S$, or mean square stability. Several works explore relations between these different notions, see, e.g., \cite{Gurvits1995Stability, Fang1994Almost, Feng1992Stochastic, Dai2011Periodically, Costa2005Discrete, Dai2011Weakly}. In particular, \cite{Dai2011Periodically} establishes a probabilistic version of the finiteness conjecture, i.e., if \eqref{SwitchedSystem} is periodically stable, then $\rhop(\mu, \mathcal A) < 1$ for every $\mu \in \mathfrak M$.

Another interesting fact is that the quantities $\rhod(\mathcal A)$ and of $\rhop(\mu, \mathcal A)$ for $\mu \in \mathfrak M$ could be equivalently computed by replacing the norm $\norm{\cdot}$ in \eqref{eq:intro-rho-sigma} by the spectral radius. In the deterministic case, this result is known as the Berger--Wang formula or also as the Joint Spectral Radius Theorem \cite{Jungers2009Joint}, and it has been extended to the Markovian setting in \cite{Kozyakin2014Berger, Dai2014Robust}.

In order to describe the main results of our paper, let us identify a measure $\mu \in \mathfrak M$ with the pair $(\nu, P)$, where $P$ is the transition matrix of the Markov chain corresponding to $\mu$ and $\nu$ is its (invariant) initial probability. In particular, we write $\rhop(\nu, P, \mathcal A)$ for $\rhop(\mu, \mathcal A)$. Our main result concerning \ref{Question1} (see Theorem \ref{MainTheoFixedP}) establishes that a necessary and sufficient condition for equality is that $\rhod(\mathcal A) = \rho(A_{i_k} \dotsm A_{i_1})^{1/k}$ for every $(i_1, \dotsc, i_k)$ that corresponds to a cycle in the directed weighted graph determined by $P$ such that $\nu_{i_1} > 0$. The necessity follows from results provided in \cite{Morris2013Mather}, whereas, for sufficiency, we consider first the particular case where $\mathcal A$ is irreducible and $P$ is strongly connected (see Lemma~\ref{Lemm-PSC-AIrreducible}). Irreducibility implies in particular the existence of a Barabanov norm for $\mathcal A$ (see Definition~\ref{DefiBarabanov}), which is an important tool in our proof. We then generalize the result to the case of reducible $\mathcal A$ (see Lemma~\ref{Lemm-PSC}) by a suitable block decomposition of the matrices in $\mathcal A$ and the fact that $\rhop(\nu, P, \mathcal A)$ and $\rhod(\mathcal A)$ can be read on the diagonal blocks of the decomposed matrix (cf.\ \cite{Jungers2009Joint, Gerencser2008Stability}). Finally, the general case for $P$ can be obtained by using a classical block decomposition of stochastic matrices.

The equivalence established in Theorem~\ref{MainTheoFixedP} can be further characterized in terms of simultaneous similarity of the matrices $\rhod(\mathcal A)^{-1} A_i$, $i \in \{1, \dotsc, N\}$, to orthogonal matrices, under some additional assumptions on $\mathcal A$ and $P$ (Proposition \ref{PropPPositiveDiagonal}). The latter characterization is based on the description of matrix semigroups with constant spectral radius from \cite{Protasov2017Matrix}.

Our next main result, Theorem~\ref{MainTheo}, concerns \ref{Question2} and states that equality is equivalent to the existence of a family of pairwise distinct indices $i_1, \dotsc, i_k \in \{1, \dotsc, N\}$ such that $\rhod(\mathcal A) = \rho(A_{i_1} \dotsm A_{i_k})^{1/k}$. This corresponds to the case where the worst behavior of the system is attained by a periodic $\sigma$ with no repetition of indices on a period. This property is reminiscent of the \emph{finiteness property}, except for the fact that, in the finiteness property, repetition of indices is allowed. We recall that the finiteness property is known to hold only for a proper subclass of  $N$-tuples $\mathcal A$ \cite{Blondel2003Elementary, Bousch2002Asymptotic},
contrarily to what had been earlier conjectured \cite{Lagarias1995Finiteness}.
%It was conjectured in \cite{Lagarias1995Finiteness} that the finiteness property would hold for every $N$-tuple $\mathcal A$, but this has been disproved in \cite{Blondel2003Elementary, Bousch2002Asymptotic}.
By applying a standard lifting argument of Markov chains of higher order to Markov chains of order one, we generalize the equivalence stated in Theorem~\ref{MainTheo} by providing the following characterization of the finiteness property: a $N$-tuple $\mathcal A$ satisfies the finiteness property if and only if there exist $m\ge 1$ and 
a Markov chain of order $m$ whose corresponding probabilistic Lyapunov exponent is equal to $\rhod(\mathcal A)$ (see Corollary~\ref{CoroFinitenessProp}). This, in turns, is equivalent to say that the finiteness property holds if and only if the set of maximizing measures contains the measure induced by some Markov chain of arbitrary order.

\medskip

\noindent {\bf Acknowledgements.} The authors are indebted with D.~Chafaï for helpful discussions. They are also grateful to the anonymous reviewers of a preceding version of the ma\-nus\-cript for providing helpful comments and pointing out relevant literature.

\section{Definitions, notations, and basic facts}
\label{SecDefinitions}

Throughout the paper, $d$ and $N$ belong to $\mathbb N$, which is used to denote the set of positive integers. If $a$ and $b$ are positive integers, $\llbracket a, b \rrbracket$ denotes the set of integers $j$ such that $a \leq j \leq b$. For $x \in \mathbb R$, $\ceil{x}$ denotes the smallest integer greater than or equal to $x$, and we extend this notation componentwise to vectors and matrices. We use $\norm{\cdot}$ to denote a norm in $\mathbb R^d$ as well as the corresponding induced norm on the space $\mathcal M_d(\mathbb R)$ of $d \times d$ matrices with real coefficients. We only consider in this paper norms in $\mathcal M_d(\mathbb R)$ obtained as induced norms from $\mathbb R^d$. An $N$-tuple $\mathcal A = (A_1, \dotsc, A_N) \in \mathcal M_d(\mathbb R)^N$ is said to be \emph{irreducible} if the only subspaces of $\mathbb R^d$ invariant under all the matrices $A_1, \dotsc, A_N$ are $\{0\}$ and $\mathbb R^d$.

\subsection{Deterministic joint spectral radius}

Let $\Sigma(\mathcal A)$ be the discrete-time switched system defined in \eqref{SwitchedSystem}. The \emph{deterministic joint spectral radius} $\rhod(\mathcal A)$ of $\Sigma(\mathcal A)$, introduced in \cite{RotaStrang1960}, is defined by
\begin{equation}
\label{eq:def:rhod}
\rhod(\mathcal A) = \limsup_{n \to \infty} \max_{(i_1, \dotsc, i_n) \in \llbracket 1, N\rrbracket^n} \norm*{A_{i_n} \dotsm A_{i_1}}^{1/n}.
\end{equation}
Since all norms in $\mathcal M_d(\mathbb R)$ induced by norms in $\mathbb R^d$ are submultiplicative and equivalent to each other, it immediately follows that $\rhod(\mathcal A)$ does not depend on a specific choice of such a norm and that
%Since all norms in $\mathcal M_d(\mathbb R)$ are equivalent, it immediately follows that $\rhod(\mathcal A)$ does not depend on the specific choice of $\norm{\cdot}$. Since $\norm{\cdot}$ is submultiplicative, one also has that
\[
\rhod(\mathcal A) = \lim_{n \to \infty} \max_{(i_1, \dotsc, i_n) \in \llbracket 1, N\rrbracket^n} \norm*{A_{i_n} \dotsm A_{i_1}}^{1/n} = \inf_{n \in \mathbb N} \max_{(i_1, \dotsc, i_n) \in \llbracket 1, N\rrbracket^n} \norm*{A_{i_n} \dotsm A_{i_1}}^{1/n}.
\]
%Moreover, the Joint Spectral Radius Theorem (see, e.g., \cite[Theorem 2.3]{Jungers2009Joint}) asserts that
%\[
%\rhod(\mathcal A) = \limsup_{n \to \infty} \max_{(i_1, \dotsc, i_n) \in \llbracket 1, N\rrbracket^n} \rho\left(A_{i_n} \dotsm A_{i_1}\right)^{1/n}.
%\]
Notice that, for every $n \in \mathbb N$ and $(i_1, \dotsc, i_n) \in \llbracket 1, N\rrbracket^n$, we have
\begin{equation}
\label{RhoDGeqRhoWord}
\rho\left(A_{i_n} \dotsm A_{i_1}\right)^{1/n} \leq \rhod(\mathcal A),
\end{equation}
where we use the definition of $\rhod(\mathcal A)$ and the fact that $\rho(M) = \rho(M^k)^{1/k} \leq \norm{M^k}^{1/k}$ for every square matrix $M$ and $k \in \mathbb N$.

\begin{defi}[Barabanov norm]
\label{DefiBarabanov}
Let $\mathcal A = (A_1, \dotsc, A_N)$ be an $N$-tuple of $d \times d$ matrices with real coefficients. A norm $\norm{\cdot}_{\mathrm B}$ on $\mathbb R^d$ is said to be a \emph{Barabanov norm} for $\mathcal A$ if the following two conditions hold.
\begin{enumerate}
\item\label{DefiBarabanov-Extremal} For every $i \in \llbracket 1, N\rrbracket$, $\norm{A_i}_{\mathrm B} \leq \rhod(\mathcal A)$.
\item For every $x \in \mathbb R^d$ and $k \in \mathbb N$, there exists $\sigma \in \mathfrak S$ such that $\norm{A_{\sigma(k)} \dotsm A_{\sigma(1)} x}_{\mathrm B} = \rhod(\mathcal A)^k \norm{x}_{\mathrm B}$.
\end{enumerate}
\end{defi}

%\begin{remk}
%For every $i \in \llbracket 1, N\rrbracket$, one has $\norm{A_i}_{\mathrm B} \leq \rhod(\mathcal A)$
%\end{remk}

The following basic result on Barabonov norms was proved in \cite{BarabanovDiscrete}.

\begin{prop}
\label{PropBarabanov}
Let $\mathcal A$ be an $N$-tuple of $d \times d$ matrices with real coefficients. If $\mathcal A$ is irreducible, then it admits a Barabanov norm.
\end{prop}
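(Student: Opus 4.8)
The plan is to construct the Barabanov norm as a limit of suitably normalized iterates, following the classical argument of Barabanov (adapted to discrete time). First I would normalize the problem by replacing $\mathcal A$ with $\widetilde{\mathcal A} = (\rhod(\mathcal A)^{-1} A_1, \dotsc, \rhod(\mathcal A)^{-1} A_N)$, so that $\rhod(\widetilde{\mathcal A}) = 1$; note that $\rhod(\mathcal A) > 0$ because irreducibility forbids a common invariant subspace, in particular forbids all $A_i = 0$. It then suffices to produce a norm $\norm{\cdot}_{\mathrm B}$ with $\norm{A_i x}_{\mathrm B} \le \norm{x}_{\mathrm B}$ for all $i$ and such that for every $x$ there is a word realizing equality at each step. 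For the normalized family, define for $x \in \mathbb R^d$ the quantity
\[
g_n(x) = \max_{0 \le k \le n} \ \max_{(i_1, \dotsc, i_k) \in \llbracket 1, N \rrbracket^k} \norm{A_{i_k} \dotsm A_{i_1} x},
\]
with the $k=0$ term interpreted as $\norm{x}$. The key boundedness fact is that, since $\rhod(\widetilde{\mathcal A}) = 1$, the semigroup generated by the normalized matrices together with $\id$ is bounded (this is a standard consequence of irreducibility, e.g.\ via the existence of an extremal norm), so $g(x) := \sup_n g_n(x) = \sup_{k \ge 0} \max_{(i_1,\dots,i_k)} \norm{A_{i_k}\dotsm A_{i_1} x} < \infty$ for every $x$, and $g$ is a norm (positive homogeneity and the triangle inequality are immediate; positivity uses $g(x) \ge \norm{x}$).

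The function $g$ is by construction an extremal norm: $\norm{A_i x}_g \le \norm{x}_g$ for all $i$ because any word applied to $A_i x$ is a longer word applied to $x$. The main obstacle, and the heart of the proof, is upgrading extremality to the exact equality in condition \ref{DefiBarabanov}\,(2), i.e.\ showing that for every $x$ there is some $i$ with $\norm{A_i x}_g = \norm{x}_g$ (iterating this then handles arbitrary $k$). Suppose not: then for some $x_0 \ne 0$ we have $\max_i \norm{A_i x_0}_g = (1-\varepsilon)\norm{x_0}_g$ for some $\varepsilon > 0$. Using homogeneity, compactness of the $g$-unit sphere, and continuity of $g$ and of the $A_i$, I would show the defect propagates to a neighborhood: there is $\delta > 0$ so that $\max_i \norm{A_i y}_g \le (1-\varepsilon/2)\norm{y}_g$ for all $y$ in a cone around $x_0$. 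By irreducibility, the orbit of $x_0$ under all products cannot be confined away from this cone in the relevant sense; quantitatively, one shows that the restriction $h(y) := g(y) - (\text{contribution of the long-word tail})$ — more precisely, one partitions the sup defining $g(x_0)$ into words whose prefix stays in the bad cone and words that exit it — leads to $g(x_0) \le (1 - \varepsilon')\,g(x_0)$ for some $\varepsilon' > 0$, or else produces a nontrivial common invariant subspace of the $A_i$ (the set of $y$ for which the sup is not attained / is "deficient"), contradicting irreducibility. Concretely, the cleanest route: let $S = \{ y : \max_i \norm{A_i y}_g = \norm{y}_g \}$ (the set where equality holds), which is closed and cone-like; show $g(x) = \sup\{ \norm{A_{i_k}\dotsm A_{i_1} x}_g : k \ge 0 \}$ is actually a $\max$ attained at some finite word, using that $\norm{A_{i_k}\dotsm A_{i_1}x}_g$ is nonincreasing in $k$ along any branch only after it stops hitting $S$ — hence the sup over each branch is attained — and a König-type / compactness argument extracts a single attaining word; the last matrix of a maximizing word then witnesses equality, giving $x \in S$ after all. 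If one insists $S \ne \mathbb R^d$, an argument shows $\mathrm{span}(S) \cup$ its complement structure yields an invariant subspace, contradicting Proposition-hypothesis irreducibility.

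Finally, I would record that $g$ satisfies \ref{DefiBarabanov}\,(1) with equality $\max_i \norm{A_i}_g = 1$: the inequality $\le 1$ is extremality, and $\ge 1$ follows from \ref{DefiBarabanov}\,(2) applied with $k=1$ at a point of the $g$-unit sphere. Undoing the normalization, $\norm{\cdot}_{\mathrm B} := g$ is a Barabanov norm for $\mathcal A$ with $\norm{A_i}_{\mathrm B} \le \rhod(\mathcal A)$ and the exact-growth property along well-chosen words. The delicate point throughout — and where I expect to spend the most care — is the passage from "extremal" to "Barabanov" (the attainment/equality step), since this is exactly where irreducibility is indispensable and where the naive limsup construction only gives an inequality; everything else is soft (homogeneity, triangle inequality, compactness, and the elementary bound $g(x) \ge \norm{x}$).
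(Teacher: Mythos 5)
The paper does not actually prove this proposition; it cites Barabanov's original work, so your proposal must be judged against the classical construction. The decisive flaw is your choice of $g(x) = \sup_{k \ge 0} \max_{(i_1,\dotsc,i_k)} \norm{A_{i_k}\dotsm A_{i_1}x}$. This produces an \emph{extremal} norm, but the passage from extremality to the Barabanov equality in condition (2) of Definition~\ref{DefiBarabanov} is precisely the content of the theorem, and your sketch of that passage is not a proof: the claim that the supremum defining $g(x)$ is attained at a finite word is unjustified (for families where $\rhod(\mathcal A)$ is only approached asymptotically along ever longer products, the supremum over a branch need not be attained), the ``K\"onig-type'' extraction is not carried out, and the assertion that the set $S$ of deficient points ``yields an invariant subspace'' is stated without an argument ($S$ is a closed cone, not a subspace, and it is not invariant in any obvious sense). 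Observe that $\max_i g(A_i x) = \sup_{w \neq \emptyset} \norm{A_w x}$ while $g(x) = \max\bigl(\norm{x}, \sup_{w \neq \emptyset}\norm{A_w x}\bigr)$, so the equality you need amounts to $\sup_{w \neq \emptyset}\norm{A_w x} \ge \norm{x}$ for every $x$ and every admissible base norm --- a statement that does not follow from anything you have established and that your contradiction argument never actually excludes.

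The standard (and essentially cost-free) repair is to replace the supremum by a limit superior: with $\rhod(\mathcal A)$ normalized to $1$, set $v(x) = \limsup_{k \to \infty} \max_{(i_1,\dotsc,i_k)} \norm{A_{i_k}\dotsm A_{i_1}x}$. Since $\max_{\abs{w}=k+1}\norm{A_w x} = \max_{1 \le i \le N}\max_{\abs{w'}=k}\norm{A_{w'}A_i x}$ and the limit superior of a maximum of \emph{finitely many} sequences equals the maximum of their limits superior, one gets $v(x) = \max_i v(A_i x)$ immediately, which is the Barabanov equality (iterate to get it for all $k$). Irreducibility then enters only where it must: to show $v$ is finite (boundedness of the normalized semigroup, i.e.\ nondefectiveness) and positive definite ($\{x : v(x)=0\}$ is a common invariant subspace, and $v \not\equiv 0$ because $\max_{\abs{w}=k}\norm{A_w} \ge 1$ for all $k$). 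Two further small points: your justification that $\rhod(\mathcal A) > 0$ is insufficient --- ``not all $A_i = 0$'' does not preclude $\rhod(\mathcal A)=0$; one needs Levitzki's theorem (a nil semigroup is simultaneously strictly triangularizable, hence reducible for $d \ge 2$), and for $d=1$ the family $(0)$ is irreducible with $\rhod = 0$, so the normalization step must treat that degenerate case separately (any norm is then Barabanov). Finally, invoking ``existence of an extremal norm'' to prove boundedness of the semigroup is close to circular in a proof whose goal is to construct such a norm; it is acceptable only if you cite an independent proof of nondefectiveness.
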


\subsection{Probabilistic joint spectral radius}

We now provide a probabilistic counterpart to $\rhod(\mathcal A)$. For that purpose, we collect some basic notions concerning transition matrices of Markov chains.

\begin{defi}
\label{DefiMarkov}
Let $P = (p_{ij})_{1 \leq i, j \leq N}$ be an $N \times N$ matrix with nonnegative coefficients.
\begin{enumerate}
\item $P$ is said to be \emph{stochastic} if, for every $i \in \llbracket 1, N\rrbracket$, $\sum_{j=1}^N p_{ij} = 1$.
\item\label{ItemStronglyConnected} $P$ is said to be \emph{strongly connected} if it is not similar via a permutation to an upper block triangular matrix.
\item For $k \in \mathbb N$ and $i_1, \dotsc, i_k \in \llbracket 1, N\rrbracket$, we say that $(i_1, \dotsc, i_k)$ is a \emph{$P$-word} if $p_{i_1 i_2} p_{i_2 i_3} \dotsm \allowbreak p_{i_{k-1} i_k} > 0$. The integer $k$ is called the \emph{length} of the $P$-word $(i_1, \dotsc, i_k)$. We say that $(i_1, \dotsc, i_k)$ is a \emph{$P$-cycle} if $p_{i_1 i_2} p_{i_2 i_3} \dotsm p_{i_{k-1} i_k} p_{i_k i_1} > 0$. The index $i_1$ is called the \emph{starting index} of the $P$-cycle $(i_1, \dotsc, i_k)$.

\item Let $\nu$ be a vector in $\mathbb R^N$ with nonnegative coefficients. We say that $(i_1, \dotsc, i_k)$ is a \emph{$(\nu, P)$-word} (respectively, \emph{$(\nu, P)$-cycle}) if it is a $P$-word (respectively, $P$-cycle) and $\nu_{i_1} > 0$.

\item If $P$ is stochastic, a row vector $\nu = (\nu_1, \dotsc, \nu_N) \in \mathbb R^N$ is said to be an \emph{invariant probability} for $P$ if $\nu_i \geq 0$ for every $i \in \llbracket 1, N\rrbracket$, $\sum_{i=1}^N \nu_i = 1$, and $\nu = \nu P$.
\end{enumerate}
\end{defi}

\begin{remk}
\label{RemkStronglyConnected}
In the context of discrete-time Markov chains in a finite state space with $N$ states, the \emph{transition matrix} is the stochastic matrix $P = (p_{ij})_{1 \leq i, j \leq N}$ where $p_{ij}$ represents the probability to switch from the state $i$ to the state $j$. Notice that $P$ is strongly connected if and only if its associated oriented graph $G_P$ is strongly connected. In the stochastic processes literature, the strong connectedness of $P$ is more often referred to as \emph{irreducibility}. We choose to stick with the former to avoid ambiguities with the homonymous notion for $N$-tuples of matrices. Notice also that the notions of strong connectedness, $P$-cycles, and $P$-words only depend on the adjacency matrix $\ceil{P}$ of the graph $G_P$, while $(\nu, P)$-cycles and $(\nu, P)$-words depend on $\ceil{P}$ and $\ceil{\nu}$.
\end{remk}

\begin{remk}
\label{RemkInvariantProbability}
Recall that, by the Perron--Frobenius Theorem, a stochastic matrix $P$ always admits an invariant probability, which is unique and has positive entries if $P$ is strongly connected. In the latter case, the definitions of $P$-word and $(\nu, P)$-word coincide, as well as those of $P$-cycle and $(\nu, P)$-cycle.
\end{remk}

We have the following classical decomposition result for stochastic matrices \cite[\S\S 1.2 and 4.2]{Seneta2006Nonnegative}.

\begin{prop}
\label{PropStochasticDecomposition}
Let $P \in \mathcal M_N(\mathbb R)$ be a stochastic matrix. Then, up to a permutation in the set of indices $\llbracket 1, N\rrbracket$, $P$ is given by
\begin{equation}
\label{DecomposeP}
P = \begin{pmatrix}
P_{1} & 0 & \cdots & 0 & 0 & 0 \\
0 & P_2 & 0 & \cdots & 0 & 0 \\
\vdots & 0 & \ddots & \ddots & \vdots & \vdots \\
0 & \vdots &\ddots & \ddots & 0 & 0 \\
0 & 0 & \cdots & 0 & P_{R} & 0 \\
* & * & \cdots & * & * & Q\\
\end{pmatrix},
\end{equation}
where $\rho(Q) < 1$ and, for $i \in \llbracket 1, R\rrbracket$, $P_i \in \mathcal M_{n_i}(\mathbb R)$ is a stochastic and strongly connected matrix.

Moreover, for $i \in \llbracket 1, R\rrbracket$, let $\nu^{[i]}$ be the unique invariant probability for $P_i$ and denote by the same symbol its canonical extension as a vector in $\mathbb R^N$ according to the decomposition \eqref{DecomposeP}. Then every invariant probability $\nu \in \mathbb R^N$ can be uniquely decomposed as
\begin{equation}
\label{DecomposeNu}
\nu = \sum_{i=1}^R \alpha_i \nu^{[i]},
\end{equation}
where $\alpha_1, \dotsc, \alpha_R \in [0, 1]$ and $\sum_{i=1}^R \alpha_i = 1$.
\end{prop}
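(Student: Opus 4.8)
The proposition is classical, and the plan is to derive it from the combinatorial structure of the oriented graph $G_P$ associated with $\ceil{P}$ (cf.\ Remark~\ref{RemkStronglyConnected}). First, I would introduce on $\llbracket 1, N\rrbracket$ the communication relation $i \sim j$, meaning that $G_P$ contains an oriented path from $i$ to $j$ and one from $j$ to $i$; this is an equivalence relation whose classes are the strongly connected components of $G_P$, and the restriction of $P$ to the rows and columns of each such class is strongly connected in the sense of Definition~\ref{DefiMarkov}\ref{ItemStronglyConnected}. Contracting each class to a single vertex yields a finite directed acyclic graph on the set of classes; I call a class \emph{final} when it has no outgoing edge, that is, when $p_{ij} = 0$ for every $i$ in the class and every $j$ outside it. A nonempty finite acyclic graph has at least one sink, so final classes exist; list them as $C_1, \dotsc, C_R$ with $R \geq 1$, and let $\mathcal T$ be the union of the remaining (non-final) classes. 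Since $P$ is stochastic and a final class $C_i$ has no outgoing edges, the rows indexed by $C_i$ sum to $1$ within $C_i$, so the diagonal block $P_i \in \mathcal M_{n_i}(\mathbb R)$, $n_i = \# C_i$, is stochastic, and it is strongly connected by construction. Relabelling the indices so that $C_1, \dotsc, C_R$ appear first and the indices of $\mathcal T$ last, in an order compatible with the acyclic graph, every entry of $P$ that lies in a row indexed by $\mathcal R := C_1 \cup \dotsb \cup C_R$ but outside the corresponding diagonal block vanishes, and one obtains exactly the form \eqref{DecomposeP}, with $Q$ the submatrix of $P$ with rows and columns in $\mathcal T$.

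The inequality $\rho(Q) < 1$ would then be obtained as follows. From any index $i \in \mathcal T$ one may follow edges of $G_P$ along a path that reaches a final class — indeed, any maximal path in the acyclic graph of classes terminates at a sink — and, choosing a shortest such path, one of length at most $N$. Since $\mathcal R$ has no outgoing edges it is absorbing, so being in $\mathcal T$ at time $N$ (starting from $i$) is the same as remaining in $\mathcal T$ throughout the first $N$ steps; consequently $\sum_{j \in \mathcal T} (Q^N)_{ij} < 1$ for every $i \in \mathcal T$. Hence the maximal row-sum norm of $Q^N$ is strictly less than $1$, so $\rho(Q)^N = \rho(Q^N) < 1$ and $\rho(Q) < 1$. (When $\mathcal T = \emptyset$ the matrix $P$ is already block diagonal and this step is vacuous.)

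For the second part, let $\nu = \nu P$ be an invariant probability and write $\nu = (\nu_{\mathcal R}, \nu_{\mathcal T})$ according to the decomposition above. Since no edge goes from $\mathcal R$ to $\mathcal T$, the $\mathcal T$-block of the identity $\nu = \nu P$ reads $\nu_{\mathcal T} = \nu_{\mathcal T} Q$, hence $\nu_{\mathcal T} = \nu_{\mathcal T} Q^n \to 0$ as $n \to \infty$ by the previous step, so $\nu_{\mathcal T} = 0$. Thus $\nu$ is supported in $\mathcal R$, where $P$ is block diagonal with blocks $P_1, \dotsc, P_R$, whence $\nu|_{C_i} = \nu|_{C_i} P_i$ for each $i$. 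Setting $\alpha_i := \sum_{j \in C_i} \nu_j \geq 0$, one gets $\sum_{i=1}^R \alpha_i = \sum_j \nu_j = 1$; if $\alpha_i > 0$, then $\alpha_i^{-1}\, \nu|_{C_i}$ is a nonnegative $P_i$-invariant vector of total mass $1$, hence coincides with the unique invariant probability $\nu^{[i]}$ of the strongly connected matrix $P_i$ (Remark~\ref{RemkInvariantProbability}), while $\alpha_i = 0$ forces $\nu|_{C_i} = 0$; in either case $\nu|_{C_i} = \alpha_i \nu^{[i]}$, and extending by zero yields \eqref{DecomposeNu}. The coefficients are unique because each $\nu^{[i]}$ is supported exactly on $C_i$ with positive entries and the $C_i$ are pairwise disjoint, which forces $\alpha_i = \sum_{j \in C_i} \nu_j$.

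I do not expect a genuine obstacle here: this is the standard recurrent/transient decomposition of a finite-state Markov chain. The only points deserving some care are the purely graph-theoretic construction of the permutation bringing $P$ to the form \eqref{DecomposeP} and the elementary estimate $\rho(Q) < 1$; both being classical, one may alternatively just invoke the cited reference.
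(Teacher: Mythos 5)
The paper does not prove this proposition at all: it is stated as a classical fact with a citation to Seneta (\S\S 1.2 and 4.2), so there is no in-paper argument to compare against. Your self-contained proof via communicating classes --- identifying the final (closed) classes as the stochastic, strongly connected diagonal blocks, bounding $\rho(Q)$ by showing the row sums of $Q^N$ are strictly less than $1$, and then using $\nu_{\mathcal T}=\nu_{\mathcal T}Q^n\to 0$ together with Perron--Frobenius uniqueness on each block --- is correct and complete; it is exactly the standard recurrent/transient decomposition the cited reference contains.
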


The next lemma, useful in the proof of some of our results, uses the previous decomposition to obtain that any $(\nu, P)$-cycle has all its indices corresponding to a same diagonal block $P_i$ in \eqref{DecomposeP}.

\begin{lemm}
\label{LemmDecomposeCycle}
Let $P \in \mathcal M_N(\mathbb R)$ be a stochastic matrix decomposed according to Proposition \ref{PropStochasticDecomposition}. For $i \in \llbracket 1, R\rrbracket$, let
\[
\mathcal I_i = \left\llbracket 1 + \sum_{j=1}^{i-1} n_j, \sum_{j=1}^i n_j \right\rrbracket,
\]
i.e., $\mathcal I_i$ is the set of indices corresponding to the diagonal block $P_i$ in \eqref{DecomposeP}. Let $\nu$ be an invariant probability for $P$. Then, for every $(\nu, P)$-cycle $(i_1, \dotsc, i_n)$, there exists $j \in \llbracket 1, R\rrbracket$ such that $i_1, \dotsc, i_n$ are in $\mathcal I_j$.
\end{lemm}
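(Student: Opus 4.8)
The plan is to exploit the block structure of $P$ displayed in \eqref{DecomposeP} together with the description of the support of $\nu$ coming from \eqref{DecomposeNu}. The key structural fact I would isolate first is that each recurrent block is \emph{absorbing} for one-step transitions: if $i \in \mathcal I_j$ for some $j \in \llbracket 1, R\rrbracket$ and $\ell \in \llbracket 1, N\rrbracket$ is such that $p_{i\ell} > 0$, then $\ell \in \mathcal I_j$. This is immediate from \eqref{DecomposeP}, since the block row of $P$ indexed by $P_j$ has all its nonzero entries inside the block column indexed by $P_j$; in particular, this row has zero entries in all the columns indexed by the transient part carrying $Q$.

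Next I would record that $\nu$ vanishes outside $\bigcup_{i=1}^R \mathcal I_i$: by \eqref{DecomposeNu} we have $\nu = \sum_{i=1}^R \alpha_i \nu^{[i]}$, and each $\nu^{[i]}$, being the canonical extension to $\mathbb R^N$ of the invariant probability of $P_i$, is supported on $\mathcal I_i$. Hence, for a $(\nu, P)$-cycle $(i_1, \dotsc, i_n)$, the hypothesis $\nu_{i_1} > 0$ forces $i_1 \in \mathcal I_j$ for some (necessarily unique) $j \in \llbracket 1, R\rrbracket$. I would then conclude by a short induction: since $(i_1, \dotsc, i_n)$ is a $P$-cycle, all the factors $p_{i_1 i_2}, p_{i_2 i_3}, \dotsc, p_{i_{n-1} i_n}$ are positive, so starting from $i_1 \in \mathcal I_j$ the absorbing property gives $i_2 \in \mathcal I_j$, then $i_3 \in \mathcal I_j$, and so on, yielding $i_k \in \mathcal I_j$ for every $k \in \llbracket 1, n\rrbracket$, which is exactly the claim.

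I do not expect a real obstacle here; the only points requiring a bit of care are reading off from \eqref{DecomposeP} that the rows of the recurrent blocks put no mass on the transient columns, and, correspondingly, reading off from \eqref{DecomposeNu} that an arbitrary invariant probability cannot charge the transient indices — both of which are already packaged into Proposition~\ref{PropStochasticDecomposition}. It is worth stressing that the distinction between $P$-cycles and $(\nu, P)$-cycles is essential in the statement: the transient block $Q$ may itself contain cycles (for instance, a $2 \times 2$ permutation block scaled by $1/2$ has spectral radius $1/2 < 1$ yet a cycle of length two), so the conclusion would fail for general $P$-cycles, and it is precisely the extra condition $\nu_{i_1} > 0$ that excludes the transient part and pins the starting index down to one of the recurrent blocks.
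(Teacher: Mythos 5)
Your proof is correct and follows essentially the same route as the paper's: use \eqref{DecomposeNu} to place $i_1$ in some $\mathcal I_j$, then propagate along the cycle using the block structure of \eqref{DecomposeP}. The only difference is that you spell out the absorbing property of the recurrent blocks as a separate step, which the paper leaves implicit.
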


\begin{proof}
Notice that, by \eqref{DecomposeNu}, $\nu_i = 0$ if $i \notin \bigcup_{j \in \llbracket 1, R\rrbracket}\mathcal I_j$. Hence, since $\nu_{i_1} > 0$, there exists $j \in \llbracket 1, R\rrbracket$ such that $i_1 \in \mathcal I_j$. Since $p_{i_1 i_2} > 0$, it follows by the block decomposition \eqref{DecomposeP} that $i_2 \in \mathcal I_j$. The conclusion follows by an immediate inductive argument.
\end{proof}

We also introduce the following notation.

\begin{defi}
Let $P$ be a stochastic matrix and $\mathcal A = (A_1, \dotsc, A_N)$ be an $N$-tuple of $d \times d$ matrices with real coefficients.
\begin{enumerate}
\item For every $P$-word $(i_1, \dotsc, i_k)$, we use $A(i_1, \dotsc, i_k)$ to denote the matrix product $A_{i_k} \dotsm \allowbreak A_{i_1}$. 
\item For every $s \in \llbracket 1, N\rrbracket$, let $C(P, s)$ be the matrix semigroup made of all matrix products associated with $P$-cycles with starting index $s$, i.e.,
\[
C(P, s) = \{A(i_1, \dotsc, i_k) \mid (i_1, \dotsc, i_k) \text{ is a $P$-cycle and } i_1 = s\}.
\]
We also set
\[
C(P) = \bigcup_{s \in \llbracket 1, N\rrbracket} C(P, s).
\]
\end{enumerate}
\end{defi}

We finally provide the definition of the probabilistic counterpart of $\rhod(\mathcal A)$ for $\Sigma(\mathcal A)$. Let $P = (p_{ij})_{1 \leq i, j \leq N}$ be a stochastic matrix, $\nu = (\nu_1, \dotsc, \nu_N)$ be an invariant probability for $P$, and $\mathcal A = (A_1, \dotsc, A_N)$ an $N$-tuple in $\mathcal M_d(\mathbb R)$. The \emph{probabilistic joint spectral radius} $\rhop(\nu, P, \mathcal A)$ is defined as
\begin{equation}
\label{EqDefiLambdaP}
\rhop(\nu, P, \mathcal A) = \limsup_{n \to \infty} \mathbb E_{(\nu, P)} \left[\norm*{A_{i_n} \dotsm A_{i_1}}^{1/n}\right],
\end{equation}
where
\begin{equation}
\label{Esperance}
\mathbb E_{(\nu, P)} \left[\norm*{A_{i_n} \dotsm A_{i_1}}^{1/n}\right] = \sum_{(i_1, \dotsc, i_n) \in \llbracket 1, N\rrbracket^n} \nu_{i_1} p_{i_1 i_2} \dotsm p_{i_{n-1} i_n} \norm*{A_{i_n} \dotsm A_{i_1}}^{1/n}.
\end{equation}
As in the deterministic case, $\rhop(\nu, P, \mathcal A)$ does not depend on the specific choice of the norm $\norm{\cdot}$ and we have
\begin{equation}
\label{RhoPLimInf}
\rhop(\nu, P, \mathcal A) = \lim_{n \to \infty} \mathbb E_{(\nu, P)} \left[\norm*{A_{i_n} \dotsm A_{i_1}}^{1/n}\right] = \inf_{n \in \mathbb N} \mathbb E_{(\nu, P)} \left[\norm*{A_{i_n} \dotsm A_{i_1}}^{1/n}\right].
\end{equation}

\begin{remk}
\label{RemkSumvPWord}
The expectation in \eqref{EqDefiLambdaP} is taken with respect to the random variable $(i_1, \dotsc,\allowbreak i_n) \in \llbracket 1, N\rrbracket^n$. The definition of probabilistic joint spectral radius provided here is a particular instance of a more general and comprehensive formulation based on symbolic dynamics; see, for instance, \cite{Dai2011Periodically, Dai2008Almost, Morris2013Mather}. Notice also that it follows from the definition of $(\nu, P)$-word that the summation in \eqref{Esperance} can be restricted to $(\nu, P)$-words of length $n$.
\end{remk}

\color{blue}
\begin{remk}
\label{remk:Lyap-exp}
When dealing with probabilistic switching phenomena in discrete time, several works, such as \cite{Morris2013Mather, Gerencser2008Stability, Furstenberg1960Products, Dai2008Almost, Crisanti1993Products, Arnold1998Random}, deal with the \emph{probabilistic Lyapunov exponent} $\lambda_{\mathrm p}(\nu, P, \mathcal A)$ defined by
\[
\lambda_{\mathrm p}(\nu, P, \mathcal A) = \limsup_{n \to \infty} \frac{1}{n} \mathbb E_{(\nu, P)} \left[\log \norm*{A_{i_n} \dotsm A_{i_1}}\right].
\]
Our choice to use $\rhop(\nu, P, \mathcal A)$ instead is motivated by the fact that the main goal of our paper is to compare the probabilistic behavior of \eqref{SwitchedSystem} with the worst deterministic behavior provided by the classical joint spectral radius $\rhod(\mathcal A)$, whose definition in discrete-time \eqref{eq:def:rhod} does not involve taking the logarithm of the norm of the matrix product. Working with $\rhop(\nu, P, \mathcal A)$ also has the additional advantage of being able to handle the case $\rhop(\nu, P, \mathcal A) = 0$ without dealing with the singularity of the logarithm at $0$.

Clearly, by Jensen's inequality, we have $e^{\lambda_{\mathrm p}(\nu, P, \mathcal A)} \leq \rhop(\nu, P, \mathcal A)$, but this inequality may be strict in some cases. Indeed, for $d = 1$, $N = 2$, $P = \id_2$, $\nu = (\frac{1}{2}, \frac{1}{2})$, and $\mathcal A = (A_1, A_2) \in \mathcal M_1(\mathbb R)^2 \simeq \mathbb R^2$, we easily compute that $\rhop(\nu, P, \mathcal A) = \frac{1}{2}(A_1 + A_2)$ and $e^{\lambda_{\mathrm p}}(\nu, P, \mathcal A) = \sqrt{A_1 A_2}$.

We do have equality between $e^{\lambda_{\mathrm p}(\nu, P, \mathcal A)}$ and $\rhop(\nu, P, \mathcal A)$, however, under the assumption that $(\nu, P)$ defines an \emph{ergodic} Markov chain, i.e., $\nu = \nu^{[i]}$ for some $i \in \llbracket 1, R\rrbracket$ in the decompositions \eqref{DecomposeP} and \eqref{DecomposeNu} in Proposition \ref{PropStochasticDecomposition}. Indeed, in this case, the main result of \cite{Furstenberg1960Products} implies that
\[
\lambda_{\mathrm p}(\nu, P, \mathcal A) = \lim_{n \to \infty} \frac{1}{n} \log \norm*{A_{\sigma(n)} \dotsm A_{\sigma(1)}} \qquad \text{ for $\mathbb P_{(\nu, P)}$-almost every $\sigma \in \mathfrak S$},
\]
where $\mathbb P_{(\nu, P)}$ denotes the probability measure on $\mathfrak S$ associated canonically with the transition matrix $P$ and the invariant probability $\nu$. Using this fact, one deduces that $e^{\lambda_{\mathrm p}(\nu, P, \mathcal A)} = \rhop(\nu, P, \mathcal A)$, and, in addition, we also have the equality
\begin{equation}
\label{eq:F-K}
\rhop(\nu, P, \mathcal A) = \lim_{n \to \infty} \norm*{A_{\sigma(n)} \dotsm A_{\sigma(1)}}^{1/n} \qquad \text{ for $\mathbb P_{(\nu, P)}$-almost every $\sigma \in \mathfrak S$}.
\end{equation}
Notice that, in particular, $(\nu, P)$ is ergodic when $P$ is strongly connected and $\nu$ is its unique invariant measure.
\end{remk}
\color{black}

\begin{remk}
The deterministic joint spectral radius $\rhod(\mathcal A)$ provides the worst asymptotic behavior of $\Sigma(\mathcal A)$ with respect to $\sigma \in \mathfrak S$. By introducing the probability measure $\mathbb P_{(\nu, P)}$ on $\mathfrak S$ associated canonically with the transition matrix $P$ and the invariant probability $\nu$, the quantity $\rhop(\nu, P, \mathcal A)$ defined in \eqref{EqDefiLambdaP} can be interpreted as an asymptotic behavior averaged by $\mathbb P_{(\nu, P)}$. \textcolor{blue}{When $(\nu, P)$ is ergodic, thanks to \eqref{eq:F-K}, we have the stronger interpretation of $\rhop(\nu, P, \mathcal A)$ as the $\mathbb P_{(\nu, P)}$-almost sure asymptotic behavior of $\Sigma(\mathcal A)$.}% More precisely, if, in the decompositions \eqref{DecomposeP} and \eqref{DecomposeNu} in Proposition \ref{PropStochasticDecomposition}, $\nu = \nu^{[i]}$ for some $i \in \llbracket 1, R\rrbracket$, then the main result of \cite{Furstenberg1960Products} implies that, for $\mathbb P_{(\nu, P)}$-almost every $\sigma \in \mathfrak S$,
%\[
%\rhop(\nu, P, \mathcal A) = \lim_{n \to \infty} \norm*{A_{\sigma(n)} \dotsm A_{\sigma(1)}}^{1/n}.
%\]
%Notice that the above assumption is satisfied when $P$ is strongly connected.
\end{remk}

It is immediate to see that, for every $(\nu, P, \mathcal A)$ as above, we have $\rhop(\nu, P, \mathcal A) \leq \rhod(\mathcal A)$, and then
\begin{equation}
\label{Inequality}
\rhop(\nu, P, \mathcal A) \leq \sup_{\nu^\prime} \rhop(\nu^\prime, P, \mathcal A) \leq \sup_{(\nu^\prime, P^\prime)} \rhop(\nu^\prime, P^\prime, \mathcal A) \leq \rhod(\mathcal A),
\end{equation}
where the first supremum is taken over all invariant probabilities $\nu^\prime$ for $P$ and the second one over the pairs $(\nu^\prime, P^\prime)$ made of an $N \times N$ stochastic matrix $P^\prime$ and an invariant probability $\nu^\prime$ for $P^\prime$. We find it useful to introduce the notation
\begin{equation}
\label{EqDefiRhoPPA}
\rhop(P, \mathcal A) = \sup_{\nu^\prime} \rhop(\nu^\prime, P, \mathcal A), \qquad \rhop(\mathcal A) = \sup_{(\nu^\prime, P^\prime)} \rhop(\nu^\prime, P^\prime, \mathcal A).
\end{equation}

\begin{remk}
\label{RemkSupAttained}
It follows from \eqref{RhoPLimInf} that $(\nu^\prime, P^\prime) \mapsto \rhop(\nu^\prime, P^\prime, \mathcal A)$ is upper semicontinuous. Moreover, the set of pairs $(\nu^\prime, P^\prime)$ consisting of an $N \times N$ stochastic matrix $P^\prime$ and an invariant probability $\nu^\prime$ for $P^\prime$ is compact. As a consequence, the suprema in \eqref{EqDefiRhoPPA} can be replaced by maxima.
\end{remk}

\section{Equality between deterministic and probabilistic joint spectral radii}
\label{SecResults}

\subsection{Equality between \texorpdfstring{$\rhod(\mathcal A)$}{rho d(A)} and \texorpdfstring{$\rhop(\nu, P, \mathcal A)$}{rho p(nu, P, A)}}
\label{SecNuP}

The goal of this section is to prove the following result characterizing equality between $\rhod(\mathcal A)$ and $\rhop(\nu, P, \mathcal A)$.

\begin{theo}
\label{MainTheoFixedP}
Let $P \in \mathcal M_N(\mathbb R)$ be a stochastic matrix, $\nu \in \mathbb R^N$ be an invariant probability measure for $P$, and $\mathcal A = (A_1,\allowbreak \dotsc,\allowbreak A_N) \in \mathcal M_d(\mathbb R)^N$. Then the following statements are equivalent:
\begin{enumerate}
\item\label{MainTheoFixedP-Equality} $\rhod(\mathcal A) = \rhop(\nu, P, \mathcal A)$.
\item\label{MainTheoFixedP-Spr} $\rho(A_{i_k} \dotsm A_{i_1})^{1/k} = \rhod (\mathcal A)$ for every $(\nu, P)$-cycle $(i_1, \dotsc, i_k)$.
\end{enumerate}
\end{theo}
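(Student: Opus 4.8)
\emph{Overview.} The plan is to prove the two implications separately. In both directions I would first reduce, via the stochastic decomposition of Proposition~\ref{PropStochasticDecomposition}, to the case of a strongly connected $P$, and then, after a simultaneous block-triangularization of $\mathcal A$ along a maximal flag $\{0\}=V_0\subsetneq\dots\subsetneq V_r=\mathbb R^d$ of common invariant subspaces (with $\mathcal A_{(1)},\dots,\mathcal A_{(r)}$ the induced irreducible families on the quotients $V_\ell/V_{\ell-1}$, and $A_{(\ell)}(w)$ the $\ell$-th diagonal block of $A(w)$), invoke the Barabanov norms of Proposition~\ref{PropBarabanov}. Throughout we may assume $\rhod(\mathcal A)=1$: the case $\rhod(\mathcal A)=0$ is immediate, and otherwise one rescales $\mathcal A$, which multiplies the three quantities in \ref{MainTheoFixedP-Equality}--\ref{MainTheoFixedP-Spr} by a common positive factor. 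I would use freely that $\rho(M)\leq\norm{M}$ for every induced norm; that $\rhod(\mathcal A)=\max_\ell\rhod(\mathcal A_{(\ell)})$ and $\rhop(\nu,P,\mathcal A)=\max_\ell\rhop(\nu,P,\mathcal A_{(\ell)})$ (cf.\ \cite{Jungers2009Joint,Gerencser2008Stability}); that $\rho(A_{(\ell)}(w))\leq\rho(A(w))$ for every word $w$; and that, by \eqref{RhoPLimInf}, $\rhop(\nu,P,\mathcal A)$ is both the limit and the infimum over $n$ of $\mathbb E_{(\nu,P)}\bigl[\norm{A_{i_n}\dotsm A_{i_1}}^{1/n}\bigr]$.

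\emph{Sufficiency} (\ref{MainTheoFixedP-Spr}$\,\Rightarrow\,$\ref{MainTheoFixedP-Equality}). Writing $\nu=\sum_{i=1}^R\alpha_i\nu^{[i]}$ as in \eqref{DecomposeNu}, since the chain issued from $\nu^{[i]}$ stays forever inside $\mathcal I_i$ (Lemma~\ref{LemmDecomposeCycle}) and the expectation is linear in the initial law, one gets $\rhop(\nu,P,\mathcal A)=\sum_{i\,:\,\alpha_i>0}\alpha_i\,\rhop(\nu^{[i]},P_i,\mathcal A^{[i]})$, with $\mathcal A^{[i]}$ the subfamily indexed by $\mathcal I_i$; moreover $\rhod(\mathcal A^{[i]})=1$ for those $i$ (it is $\leq\rhod(\mathcal A)$, and $\geq1$ by \ref{MainTheoFixedP-Spr} applied to any $P_i$-cycle, which is a $(\nu,P)$-cycle since $\alpha_i>0$), and by Lemma~\ref{LemmDecomposeCycle} condition \ref{MainTheoFixedP-Spr} for $(\nu,P,\mathcal A)$ restricts to \ref{MainTheoFixedP-Spr} for each $(\nu^{[i]},P_i,\mathcal A^{[i]})$. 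As $\sum_i\alpha_i=1$, this reduces everything to a strongly connected $P$ with (positive) invariant probability $\nu$. In that case the heart of the argument is the claim that there is a constant $\kappa>0$ such that $\norm{A(i_1,\dotsc,i_n)}\geq\kappa$ for \emph{every} $P$-word $(i_1,\dotsc,i_n)$. Indeed, granting it, every length-$n$ realization of the chain satisfies $\norm{A_{i_n}\dotsm A_{i_1}}^{1/n}\geq\kappa^{1/n}$ almost surely, so $\rhop(\nu,P,\mathcal A)=\lim_n\mathbb E_{(\nu,P)}\bigl[\norm{A_{i_n}\dotsm A_{i_1}}^{1/n}\bigr]\geq\lim_n\kappa^{1/n}=1$, and equality with $\rhod(\mathcal A)$ follows from \eqref{Inequality}.

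\emph{The crux.} To prove the claim, set $L=\{\ell:\rhod(\mathcal A_{(\ell)})=1\}$ (nonempty, since $\max_\ell\rhod(\mathcal A_{(\ell)})=1$) and, for $\ell\in L$, fix a Barabanov norm $\norm{\cdot}_\ell$ for the irreducible family $\mathcal A_{(\ell)}$, so that the $\ell$-th diagonal block of any product of matrices of $\mathcal A$ has $\norm{\cdot}_\ell$-norm at most $1$. Given a $P$-word $w=(i_1,\dotsc,i_n)$, strong connectedness of $P$ furnishes a $P$-path from $i_n$ to $i_1$, and concatenating it with $w$ produces a $P$-cycle $c$ with $A(c)=D\,A(w)$ for a (possibly empty) product $D$ of matrices of $\mathcal A$. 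By \ref{MainTheoFixedP-Spr} and block-triangularity, $1=\rho(A(c))=\max_\ell\rho(A_{(\ell)}(c))$; since $\rho(A_{(\ell)}(c))\leq\rhod(\mathcal A_{(\ell)})^{\abs{c}}<1$ for $\ell\notin L$ (cf.\ \eqref{RhoDGeqRhoWord}), the maximum is attained at some $\ell^\ast\in L$, and therefore
\[
1=\rho\bigl(A_{(\ell^\ast)}(c)\bigr)\leq\norm{D_{(\ell^\ast)}A_{(\ell^\ast)}(w)}_{\ell^\ast}\leq\norm{D_{(\ell^\ast)}}_{\ell^\ast}\norm{A_{(\ell^\ast)}(w)}_{\ell^\ast}\leq\norm{A_{(\ell^\ast)}(w)}_{\ell^\ast}\leq1,
\]
which forces $\norm{A_{(\ell^\ast)}(w)}_{\ell^\ast}=1$. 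Transporting this lower bound to a fixed norm on $\mathbb R^d$, using that the relevant restriction and quotient maps do not increase norms and that all norms on the finite-dimensional space $V_{\ell^\ast}/V_{\ell^\ast-1}$ are equivalent, $\norm{A(w)}\geq\kappa_{\ell^\ast}$ for a constant $\kappa_{\ell^\ast}>0$ depending only on $\ell^\ast$, so $\kappa:=\min_{\ell\in L}\kappa_\ell$ works. When $\mathcal A$ is itself irreducible this says simply that $\norm{A(w)}_{\mathrm B}=1$ for every $P$-word $w$, whence $\mathbb E_{(\nu,P)}\bigl[\norm{A_{i_n}\dotsm A_{i_1}}_{\mathrm B}^{1/n}\bigr]=1$ for all $n$. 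The hard part, I expect, is precisely this step --- closing an arbitrary $P$-word into a $P$-cycle and then reading off the Barabanov norm of the extended product; the two block decompositions around it are routine, with the identity $\rhop(\nu,P,\mathcal A)=\max_\ell\rhop(\nu,P,\mathcal A_{(\ell)})$ the one point meriting a careful verification.

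\emph{Necessity} (\ref{MainTheoFixedP-Equality}$\,\Rightarrow\,$\ref{MainTheoFixedP-Spr}). I would argue by contraposition. Let $(i_1,\dotsc,i_k)$ be a $(\nu,P)$-cycle with $\rho(A(i_1,\dotsc,i_k))^{1/k}<1$. If $\mathcal A$ is irreducible, pick a Barabanov norm $\norm{\cdot}_{\mathrm B}$; Gelfand's formula gives $\norm{A(i_1,\dotsc,i_k)^m}_{\mathrm B}^{1/(mk)}\to\rho(A(i_1,\dotsc,i_k))^{1/k}<1$, so for $m$ large and $n=mk$, on the event that the chain traverses $(i_1,\dotsc,i_k)$ exactly $m$ times in a row --- an event of positive probability because $\nu_{i_1}>0$ and all transition probabilities along the cycle are positive --- one has $\norm{A_{i_n}\dotsm A_{i_1}}_{\mathrm B}^{1/n}\leq s$ for some $s<1$, while on the complement it is at most $1$; hence $\mathbb E_{(\nu,P)}\bigl[\norm{A_{i_n}\dotsm A_{i_1}}_{\mathrm B}^{1/n}\bigr]<1$ and so $\rhop(\nu,P,\mathcal A)=\inf_{n'}\mathbb E_{(\nu,P)}\bigl[\norm{A_{i_{n'}}\dotsm A_{i_1}}_{\mathrm B}^{1/n'}\bigr]<1=\rhod(\mathcal A)$. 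For general $\mathcal A$, block-triangularize: for each $\ell$, either $\rhod(\mathcal A_{(\ell)})<1$ and then $\rhop(\nu,P,\mathcal A_{(\ell)})\leq\rhod(\mathcal A_{(\ell)})<1$, or $\rhod(\mathcal A_{(\ell)})=1$ and $(i_1,\dotsc,i_k)$ is a $(\nu,P)$-cycle with $\rho(A_{(\ell)}(i_1,\dotsc,i_k))^{1/k}\leq\rho(A(i_1,\dotsc,i_k))^{1/k}<1$, so $\rhop(\nu,P,\mathcal A_{(\ell)})<1$ by the irreducible case; either way $\rhop(\nu,P,\mathcal A)=\max_\ell\rhop(\nu,P,\mathcal A_{(\ell)})<1$. (Alternatively, as the authors do, one may deduce necessity from the study of maximizing measures in \cite{Morris2013Mather}: equality~\ref{MainTheoFixedP-Equality} forces $\mathbb P_{(\nu,P)}$ to be a maximizing measure --- by Jensen's inequality, $\exp\bigl(\lim_n\tfrac1n\mathbb E_{(\nu,P)}[\log\norm{A_{i_n}\dotsm A_{i_1}}]\bigr)\leq\rhop(\nu,P,\mathcal A)\leq\rhod(\mathcal A)$ --- and every periodic orbit in the support of a maximizing measure realizes $\rhod(\mathcal A)$, which is exactly~\ref{MainTheoFixedP-Spr}.)
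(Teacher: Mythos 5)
Your proof is correct, but it departs from the paper's in both directions, and in each case the departure is substantive. For sufficiency, after the same reduction to a strongly connected $P$ via Proposition~\ref{PropStochasticDecomposition} and the same block-triangularization of $\mathcal A$, the paper proves the \emph{stronger} statement that a single irreducible block $\overline r$ satisfies $\rho(A^{(\overline r)}(c))=\rhod(\mathcal A)^{\abs{c}}$ for \emph{all} $P$-cycles $c$ simultaneously; this requires the most delicate step of the whole argument, a contradiction obtained by concatenating high powers $A(i^R)^n\dotsm A(i^1)^n$ of putative ``bad'' cycles for each block and extracting a subsequence on which the witnessing block is constant. It then concludes via Lemma~\ref{Lemm-PSC-AIrreducible} applied to $\mathcal A^{(\overline r)}$ and the identity $\rhop(P,\mathcal A)=\max_r\rhop(P,\mathcal A^{(r)})$ from \cite{Gerencser2008Stability}. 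You sidestep all of this: by letting the witnessing block $\ell^\ast$ depend on the $P$-word $w$ and converting $\norm{A_{(\ell^\ast)}(w)}_{\ell^\ast}=1$ into a uniform lower bound $\norm{A(w)}\geq\kappa>0$ over all $P$-words, you get $\rhop(\nu,P,\mathcal A)\geq\lim_n\kappa^{1/n}=\rhod(\mathcal A)$ directly, without the concatenation argument and without the nontrivial direction of the Gerencser--Michaletzky identity. This is a genuine simplification (the word-to-cycle closure plus Barabanov sandwich is exactly the paper's Lemma~\ref{Lemm-PSC-AIrreducible}, just applied blockwise). For necessity, the paper invokes the Mather-set machinery of \cite{Morris2013Mather} as a black box, whereas your contraposition --- repeat a cycle with $\rho(A(c))<\rhod(\mathcal A)^{\abs{c}}$ many times on an event of positive probability and use $\rhop=\inf_n\mathbb E_{(\nu,P)}[\norm{\cdot}^{1/n}]$ --- is elementary and self-contained in the irreducible case; in the reducible case it does lean on $\rhop(\nu,P,\mathcal A)\leq\max_\ell\rhop(\nu,P,\mathcal A_{(\ell)})$, i.e., on the same citation the paper uses, and you should indeed perform the reduction to strongly connected $P$ first, as you announce, since that is the setting in which the paper states that identity. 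One small caveat on your parenthetical alternative: Jensen only gives $\exp\bigl(\lim_n\tfrac1n\mathbb E_{(\nu,P)}[\log\norm{A_{i_n}\dotsm A_{i_1}}]\bigr)\leq\rhop(\nu,P,\mathcal A)$, which by itself does not show that $\mathbb P_{(\nu,P)}$ is maximizing in Morris's sense when \ref{MainTheoFixedP-Equality} holds; identifying the two notions of probabilistic growth rate requires Furstenberg--Kesten-type arguments rather than Jensen. Since that route is only your fallback, this does not affect the validity of your main argument.
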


The fact that \ref{MainTheoFixedP-Equality} implies \ref{MainTheoFixedP-Spr} follows from the results in \cite{Morris2013Mather}, as detailed in the following lemma.

\begin{lemm}
\label{LemmMorris}
Let $P \in \mathcal M_N(\mathbb R)$ be a stochastic matrix, $\nu \in \mathbb R^N$ be an invariant probability measure for $P$, and $\mathcal A = (A_1,\allowbreak \dotsc,\allowbreak A_N) \in \mathcal M_d(\mathbb R)^N$. If $\rhod(\mathcal A) = \rhop(\nu, P, \mathcal A)$, then $\rho(A_{i_k} \dotsm A_{i_1})^{1/k} = \rhod (\mathcal A)$ for every $(\nu, P)$-cycle $(i_1, \dotsc, i_k)$.
\end{lemm}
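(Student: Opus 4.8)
The plan is to argue by contraposition, exhibiting a $(\nu,P)$-cycle with strictly subordinate spectral radius and using it to build a lower bound on the deterministic joint spectral radius that strictly exceeds $\rhop(\nu,P,\mathcal A)$. First I would note that, by Remark~\ref{RemkSumvPWord}, the expectation in \eqref{Esperance} only involves $(\nu,P)$-words, so $\rhop(\nu,P,\mathcal A)$ is genuinely governed by the combinatorics of $P$-words starting from states charged by $\nu$. Suppose, for contradiction, that there is a $(\nu,P)$-cycle $(i_1,\dotsc,i_k)$ with $\rho(A_{i_k}\dotsm A_{i_1})^{1/k} < \rhod(\mathcal A)$. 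Since $(i_1,\dotsc,i_k)$ is a $P$-cycle with $\nu_{i_1}>0$, the periodic switching signal $\sigma$ obtained by repeating $(i_1,\dotsc,i_k)$ indefinitely lies in the topological support of $\mathbb P_{(\nu,P)}$; more precisely, every finite prefix of it has positive probability. The point of \cite{Morris2013Mather} that I would invoke is that $\rhop(\nu,P,\mathcal A)$ is bounded above by the supremum over the support of the relevant cocycle of the pointwise upper Lyapunov exponent, equivalently by $\sup\{\rho(A(j_1,\dotsc,j_m))^{1/m} : (j_1,\dotsc,j_m)\text{ a }(\nu,P)\text{-cycle}\}$ — this is the Markovian analogue of the Berger--Wang / spectral-radius formula alluded to in the introduction (see \cite{Kozyakin2014Berger, Dai2014Robust}).

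The cleanest route, avoiding a full invocation of the ergodic machinery, is probably the following direct estimate. For any $(\nu,P)$-cycle $(i_1,\dotsc,i_k)$ with starting index $s=i_1$, set $M = A(i_1,\dotsc,i_k) = A_{i_k}\dotsm A_{i_1} \in C(P,s)$. For $n=qk$ with $q\in\mathbb N$, restricting the sum in \eqref{Esperance} to the single word $(i_1,\dotsc,i_k,i_1,\dotsc,i_k,\dotsc)$ (repeated $q$ times), whose probability is $\nu_{s}\,(p_{i_1 i_2}\dotsm p_{i_{k-1}i_k}p_{i_k i_1})^{q-1}(p_{i_1i_2}\dotsm p_{i_{k-1}i_k})$, is not directly useful because that probability decays exponentially in $q$. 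Instead I would use the sub-additivity/monotonicity in \eqref{RhoPLimInf} together with the observation that $\mathbb E_{(\nu,P)}[\norm{A_{i_n}\dotsm A_{i_1}}^{1/n}] \geq$ (probability of reaching $s$ in the first step, or within a bounded number of steps) times the analogous conditional expectation for trajectories that, from $s$, follow arbitrary concatenations of $P$-cycles based at $s$. This reduces the lower bound for $\rhop(\nu,P,\mathcal A)$ to the growth rate of the semigroup $C(P,s)$, which by the classical joint-spectral-radius theory has its growth controlled by $\sup_{B\in C(P,s)}\rho(B)^{1/(\text{length})}$; but then $\rhop(\nu,P,\mathcal A)$ equalling $\rhod(\mathcal A)$ forces this semigroup quantity to reach $\rhod(\mathcal A)$ along every base point charged by $\nu$, and a more careful accounting — averaging over all cycles based at $s$, not just one — yields that each individual cycle must already achieve $\rhod(\mathcal A)$, since a cycle with strictly smaller $\rho^{1/k}$ would, by concavity of $t\mapsto t^{1/n}$ and the strict convexity gap it creates, pull the average strictly below $\rhod(\mathcal A)$.

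In practice, rather than reprove the Markovian spectral-radius formula, I would simply cite the precise statement from \cite{Morris2013Mather} that identifies $\rhop(\nu,P,\mathcal A)$ with (the exponential of) an integral of the cocycle against the invariant measure, invoke the variational/ergodic-decomposition characterization of such integrals to see that equality with the maximum $\rhod(\mathcal A)$ forces the cocycle to be cohomologous to a constant on the support, and then read off that every periodic orbit in the support — i.e., every $(\nu,P)$-cycle — must realize the value $\rhod(\mathcal A)$ in the sense of statement \ref{MainTheoFixedP-Spr}. The main obstacle is purely expository: pinning down exactly which lemma in \cite{Morris2013Mather} gives the needed ``equality of a sup-integral with its max forces realization on all periodic orbits in the support'' and checking that its hypotheses (on the cocycle and on the shift-invariant measure induced by $(\nu,P)$) are met in our discrete, finite-alphabet setting; once that correspondence is set up, the deduction of \ref{MainTheoFixedP-Spr} is immediate, using \eqref{RhoDGeqRhoWord} for the reverse inequality $\rho(A_{i_k}\dotsm A_{i_1})^{1/k}\leq\rhod(\mathcal A)$.
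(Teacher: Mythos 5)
Your overall strategy --- placing the periodic signal $\sigma$ associated with the cycle in the support of $\mathbb P_{(\nu,P)}$, observing that the equality $\rhod(\mathcal A)=\rhop(\nu,P,\mathcal A)$ makes $\mathbb P_{(\nu,P)}$ a maximizing measure, and appealing to \cite{Morris2013Mather} --- is exactly the paper's route. However, as written the proposal has two genuine gaps.

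First, the intermediate ``direct estimate'' is not correct. The bound you propose to import from \cite{Morris2013Mather}, namely that $\rhop(\nu,P,\mathcal A)$ is dominated by the \emph{supremum} of $\rho(A(j_1,\dotsc,j_m))^{1/m}$ over $(\nu,P)$-cycles, has the wrong quantifier: a single deficient cycle does not lower that supremum, so such a bound cannot yield the conclusion, which concerns \emph{every} cycle. The subsequent averaging/concavity argument also fails: a cycle $(i_1,\dotsc,i_k)$ with $\rho(A_{i_k}\dotsm A_{i_1})^{1/k}<\rhod(\mathcal A)$ may perfectly well satisfy $\norm{A_{i_k}\dotsm A_{i_1}}=\rhod(\mathcal A)^k$ for the norm appearing in \eqref{Esperance} (only high powers of the cycle product are guaranteed to have small norm), and the $\mathbb P_{(\nu,P)}$-weight of words containing long consecutive runs of that cycle decays exponentially, so there is no elementary reason why one bad cycle ``pulls the average strictly below $\rhod(\mathcal A)$''. (Moreover, concavity of $t\mapsto t^{1/n}$ pushes the expectation of the root \emph{up} relative to the root of the expectation, not down.) Something genuinely stronger is needed at this point, and that is precisely what the Mather-set machinery supplies.

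Second, you leave the decisive citation unspecified and the final deduction unproved. The paper invokes \cite[Theorem~2.3(3)]{Morris2013Mather}: every $\sigma$ in the Mather set (the union of the supports of all maximizing measures) satisfies $\limsup_{n\to\infty}\rhod(\mathcal A)^{-n}\rho(A_{\sigma(n)}\dotsm A_{\sigma(1)})=1$. Even granting this, the conclusion is not ``immediate'': one must still pass from this limsup along the whole periodic orbit to the spectral radius of the single product $M=\rhod(\mathcal A)^{-k}A_{i_k}\dotsm A_{i_1}$. The paper does so by writing $n=j+\ell k$ and noting that, if $\rho(M)<1$, then $\rho(A_{i_j}\dotsm A_{i_1}M^\ell)\to 0$ as $\ell\to\infty$, contradicting the limsup being $1$; this short but necessary step (together with the trivial case $\rhod(\mathcal A)=0$, handled via \eqref{RhoDGeqRhoWord}) is missing from your sketch.
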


\begin{proof}
If $\rhod(\mathcal A) = 0$, the result follows trivially from \eqref{RhoDGeqRhoWord}. We then assume $\rhod(\mathcal A) > 0$,
\color{blue}
we decompose $P$ and $\nu$ according to Proposition \ref{PropStochasticDecomposition}, and we use in the sequel the same notations as in its statement. We also let $\mathcal I_1, \dotsc, \mathcal I_R$ be defined as in the statement of Lemma~\ref{LemmDecomposeCycle}. Thanks to \eqref{DecomposeNu}, \eqref{EqDefiLambdaP}, and \eqref{Esperance}, we have
\begin{equation}
\label{LambdaPConvexInNu-0}
\rhop(\nu, P, \mathcal A) = \sum_{j=1}^R \alpha_j \rhop(\nu^{[j]}, P, \mathcal A).
\end{equation}
By \eqref{Inequality}, we have $\rhop(\nu^{[j]}, P, \mathcal A) \leq \rhod(\mathcal A)$ for every $j \in \llbracket 1, R\rrbracket$ and, since $\alpha_j \in [0, 1]$ for every $j \in \llbracket 1, R\rrbracket$ and $\sum_{j=1}^R \alpha_j = 1$, we deduce from \eqref{LambdaPConvexInNu-0} and the equality $\rhod(\mathcal A) = \rhop(\nu, P, \mathcal A)$ that $\rhop(\nu^{[j]}, P, \mathcal A) = \rhod(\mathcal A)$ for every $j \in \llbracket 1, R\rrbracket$ such that $\alpha_j > 0$. 

Let $(i_1, \dotsc, i_k)$ be a $(\nu, P)$-cycle and note that, by Lemma~\ref{LemmDecomposeCycle}, there exists $r \in \llbracket 1, R\rrbracket$ such that $i_1, \dotsc, i_k$ are in $\mathcal I_r$, and thus, in particular, $(i_1, \dotsc, i_k)$ is also a $(\nu^{[r]}, P)$-cycle. Moreover, such a $r$ necessarily satisfies $\alpha_r > 0$.
\color{black}
Consider the $k$-periodic switching signal $\sigma \in \mathfrak S$ corresponding to $(i_1, \dotsc, i_k)$, defined by $\sigma(j + \ell k) = i_j$ for all integers $j \in \llbracket 1, k\rrbracket$ and $\ell \geq 0$. Endow $\mathfrak S$ with its usual product topology and denote by \textcolor{blue}{$\mathbb P_r$} the Borel probability measure on $\mathfrak S$ corresponding to the Markov chain defined by \textcolor{blue}{$(\nu^{[r]}, P)$}. Note that, since $(i_1, \dotsc, i_k)$ is a \textcolor{blue}{$(\nu^{[r]}, P)$}-cycle, for every $n \in \mathbb N$ the set $\{\widetilde\sigma \in \mathfrak S \mid \widetilde\sigma(i) = \sigma(i) \text{ for every } i \in \llbracket 1, n\rrbracket\}$ has positive \textcolor{blue}{$\mathbb P_r$} measure, and thus $\sigma$ is in the support of \textcolor{blue}{$\mathbb P_r$}. Moreover, \textcolor{blue}{using also Remark~\ref{remk:Lyap-exp}, we have $\rhod(\mathcal A) = \rhop(\nu^{[r]}, P, \mathcal A) = e^{\lambda_{\mathrm p}(\nu^{[r]}, P, A)}$, and then $\mathbb P_r$} is a maximizing measure of $\mathcal A$ in the sense of \cite{Morris2013Mather}, where the set of maximizing measures of $\mathcal A$ is defined as the set of all Borel probability measures on $\mathfrak S$ invariant under the usual time shift and such that the corresponding probabilistic \textcolor{blue}{Lyapunov exponent} coincides with \textcolor{blue}{$\log\rhod(\mathcal A)$}. Hence $\sigma$ belongs to the Mather set of $\mathcal A$ (see \cite[Theorem~2.3]{Morris2013Mather}, where the Mather set of $\mathcal A$ is defined as the union of the supports of all maximizing measures of $\mathcal A$), and thus, by \cite[Theorem~2.3(3)]{Morris2013Mather}, we get
\begin{equation}
\label{LimsupMorris}
\limsup_{n \to \infty} \rhod(\mathcal A)^{-n} \rho(A_{\sigma(n)} \dotsm A_{\sigma(1)}) = 1.
\end{equation}
Set $M = \rhod(\mathcal A)^{-k} A_{i_k} \dotsm A_{i_1}$. By \eqref{RhoDGeqRhoWord}, we have that $\rho(M) \leq 1$. For every $n \geq 1$, there exist integers $\ell \geq 0$ and $j \in \llbracket 0, k-1\rrbracket$ such that $n = j + \ell k$. Since $\sigma$ is $k$-periodic, we have that
\[
\rhod(\mathcal A)^{-n} \rho(A_{\sigma(n)} \dotsm A_{\sigma(1)}) = \rhod(\mathcal A)^{-j} \rho(A_{i_j} \dotsm A_{i_1} M^\ell).
\]
If $\rho(M) < 1$, then the right-hand side of the above inequality tends to $0$ as $\ell \to \infty$, contradicting \eqref{LimsupMorris}. Hence, we have necessarily $\rho(M) = 1$.
\end{proof}

The proof that \ref{MainTheoFixedP-Spr} implies \ref{MainTheoFixedP-Equality} in Theorem \ref{MainTheoFixedP} is decomposed in three steps. We first establish the result under the extra assumptions that $\mathcal A$ is irreducible and $P$ is strongly connected (Lemma~\ref{Lemm-PSC-AIrreducible}). We then obtain the conclusion under the sole additional assumption that $P$ is strongly connected (Lemma~\ref{Lemm-PSC}). Finally, we consider the general case in the third step.

%We start with the following proposition.

\begin{lemm}
\label{Lemm-PSC-AIrreducible}
Let $P \in \mathcal M_d(\mathbb R)$ be a stochastic strongly connected matrix, $\mathcal A = (A_1, \dotsc,\allowbreak A_N) \allowbreak \in \mathcal M_d(\mathbb R)^N$ be irreducible, and $\norm{\cdot}_{\mathrm B}$ be a Barabanov norm for $\mathcal A$. Then the following statements are equivalent:
\begin{enumerate}
\item\label{Lemm-PSC-AIrreducible-Equality} $\rhod(\mathcal A) = \rhop(P, \mathcal A)$.
\item\label{Lemm-PSC-AIrreducible-Spr} $\rho(A_{i_k} \dotsm A_{i_1})^{1/k} = \rhod(\mathcal A)$ for every $P$-cycle $(i_1, \dotsc, i_k)$.
\item\label{Lemm-PSC-AIrreducible-Barabanov} $\norm*{A_{i_k} \dotsm A_{i_1}}_{\mathrm B}^{1/k} = \rhod(\mathcal A)$ for every $P$-word $(i_1, \dotsc, i_k)$.
\end{enumerate}
\end{lemm}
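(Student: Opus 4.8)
The plan is to prove the circular chain $\ref{Lemm-PSC-AIrreducible-Equality}\Rightarrow\ref{Lemm-PSC-AIrreducible-Spr}\Rightarrow\ref{Lemm-PSC-AIrreducible-Barabanov}\Rightarrow\ref{Lemm-PSC-AIrreducible-Equality}$. Two preliminary simplifications are in order. First, if $\rhod(\mathcal A)=0$, then the Barabanov property~\ref{DefiBarabanov-Extremal} forces $A_i=0$ for every $i$, so all three statements hold trivially; from now on I assume $\rhod(\mathcal A)>0$. Second, since $P$ is strongly connected it admits a unique invariant probability $\nu$, whose entries are all positive (Remark~\ref{RemkInvariantProbability}); hence $\rhop(P,\mathcal A)=\rhop(\nu,P,\mathcal A)$, and the notions of $P$-word and $(\nu,P)$-word — respectively of $P$-cycle and $(\nu,P)$-cycle — coincide. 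With these reductions, $\ref{Lemm-PSC-AIrreducible-Equality}\Rightarrow\ref{Lemm-PSC-AIrreducible-Spr}$ is immediate from Lemma~\ref{LemmMorris}: the equality $\rhod(\mathcal A)=\rhop(\nu,P,\mathcal A)$ is exactly its hypothesis, and its conclusion gives $\rho(A_{i_k}\dotsm A_{i_1})^{1/k}=\rhod(\mathcal A)$ for every $(\nu,P)$-cycle, hence for every $P$-cycle.

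The \emph{main step} is $\ref{Lemm-PSC-AIrreducible-Spr}\Rightarrow\ref{Lemm-PSC-AIrreducible-Barabanov}$, and it is here that I would jointly exploit the strong connectedness of $P$ and the Barabanov norm. Let $(i_1,\dotsc,i_k)$ be a $P$-word. Using the strong connectedness of the graph $G_P$, I would pick a $P$-word joining $i_k$ back to $i_1$ and concatenate it with $(i_1,\dotsc,i_k)$ to obtain a $P$-cycle of the form $(i_1,\dotsc,i_k,j_1,\dotsc,j_{r-1})$ of some length $n=k+r-1\ge k$; its associated product factors as $A(i_1,\dotsc,i_k,j_1,\dotsc,j_{r-1})=BC$ with $C=A(i_1,\dotsc,i_k)=A_{i_k}\dotsm A_{i_1}$ a product of $k$ matrices and $B$ a (possibly empty) product of $r-1$ matrices. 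The Barabanov property~\ref{DefiBarabanov-Extremal} yields $\norm{C}_{\mathrm B}\le\rhod(\mathcal A)^{k}$ and $\norm{B}_{\mathrm B}\le\rhod(\mathcal A)^{r-1}$, whereas \ref{Lemm-PSC-AIrreducible-Spr}, applied to the $P$-cycle just built, gives $\rho(BC)=\rhod(\mathcal A)^{n}$. Since $\norm{\cdot}_{\mathrm B}$ is an induced, hence submultiplicative, norm,
\[
\rhod(\mathcal A)^{n}=\rho(BC)\le\norm{BC}_{\mathrm B}\le\norm{B}_{\mathrm B}\,\norm{C}_{\mathrm B}\le\rhod(\mathcal A)^{r-1}\rhod(\mathcal A)^{k}=\rhod(\mathcal A)^{n},
\]
so all inequalities are equalities; as $\rhod(\mathcal A)>0$, this forces $\norm{C}_{\mathrm B}=\rhod(\mathcal A)^{k}$, i.e.\ $\norm{A_{i_k}\dotsm A_{i_1}}_{\mathrm B}^{1/k}=\rhod(\mathcal A)$, and since the $P$-word was arbitrary, \ref{Lemm-PSC-AIrreducible-Barabanov} follows. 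I expect the only slightly delicate point here to be the bookkeeping of the word-into-cycle completion (degenerate situations such as $k=1$ or $i_k=i_1$), which strong connectedness always permits because every vertex of $G_P$ lies on some cycle.

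For the closing implication $\ref{Lemm-PSC-AIrreducible-Barabanov}\Rightarrow\ref{Lemm-PSC-AIrreducible-Equality}$, I would evaluate the expectation in \eqref{EqDefiLambdaP} using the Barabanov norm, which is legitimate because $\rhop(\nu,P,\mathcal A)$ does not depend on the chosen norm. By Remark~\ref{RemkSumvPWord} the sum in \eqref{Esperance} runs over $(\nu,P)$-words of length $n$, i.e.\ over $P$-words, and on each of them \ref{Lemm-PSC-AIrreducible-Barabanov} gives $\norm{A_{i_n}\dotsm A_{i_1}}_{\mathrm B}^{1/n}=\rhod(\mathcal A)$; since the weights $\nu_{i_1}p_{i_1 i_2}\dotsm p_{i_{n-1}i_n}$ sum to $1$, this yields $\mathbb E_{(\nu,P)}\bigl[\norm{A_{i_n}\dotsm A_{i_1}}_{\mathrm B}^{1/n}\bigr]=\rhod(\mathcal A)$ for every $n$, and hence $\rhop(\nu,P,\mathcal A)=\rhod(\mathcal A)$ by \eqref{RhoPLimInf}, that is $\rhop(P,\mathcal A)=\rhod(\mathcal A)$, which closes the loop. (One also gets $\ref{Lemm-PSC-AIrreducible-Barabanov}\Rightarrow\ref{Lemm-PSC-AIrreducible-Spr}$ directly, by applying \ref{Lemm-PSC-AIrreducible-Barabanov} to the $n$-fold concatenation of a given $P$-cycle $(i_1,\dotsc,i_k)$ — a $P$-word of length $nk$ with product $(A_{i_k}\dotsm A_{i_1})^{n}$ — and letting $n\to\infty$ via Gelfand's formula; this is not needed for the equivalence but provides a useful cross-check.)
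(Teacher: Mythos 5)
Your proposal is correct and follows essentially the same route as the paper: \ref{Lemm-PSC-AIrreducible-Equality}$\Rightarrow$\ref{Lemm-PSC-AIrreducible-Spr} via Lemma~\ref{LemmMorris}, \ref{Lemm-PSC-AIrreducible-Barabanov}$\Rightarrow$\ref{Lemm-PSC-AIrreducible-Equality} by evaluating the expectation in the Barabanov norm, and the key step \ref{Lemm-PSC-AIrreducible-Spr}$\Rightarrow$\ref{Lemm-PSC-AIrreducible-Barabanov} by completing the $P$-word to a $P$-cycle and squeezing $\norm{A_{i_k}\dotsm A_{i_1}}_{\mathrm B}$ between the spectral-radius lower bound and the extremality of $\norm{\cdot}_{\mathrm B}$. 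The only cosmetic difference is your separate treatment of the case $\rhod(\mathcal A)=0$, which the paper's chain of inequalities absorbs implicitly.
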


\begin{proof}
The fact that \ref{Lemm-PSC-AIrreducible-Equality} implies \ref{Lemm-PSC-AIrreducible-Spr} is a particular case of Lemma~\ref{LemmMorris}. Moreover, it is immediate that \ref{Lemm-PSC-AIrreducible-Barabanov} implies \ref{Lemm-PSC-AIrreducible-Equality} thanks to \eqref{EqDefiLambdaP}, \eqref{Esperance}, and Remark \ref{RemkSumvPWord}. We are then left to prove that \ref{Lemm-PSC-AIrreducible-Spr} implies \ref{Lemm-PSC-AIrreducible-Barabanov}.

 %We prove the converse by contrapositive. Notice that, by definition of Barabanov norm, one has $\norm*{A_{i_n} \dotsm A_{i_1}}_{\mathrm B} \leq \rhod(\mathcal A)^n$ for every $n \in \mathbb N$ and every $P$-word $(i_1, \dotsc, i_n)$. Assuming that \ref{Lemm-PSC-AIrreducible-Barabanov} does not hold, there exist $n \in \mathbb N$ and a $P$-word $(i_1, \dotsc, i_{n})$ such that
%\[
%\norm*{A_{i_{n}} \dotsm A_{i_1}}_{\mathrm B}^{1/n} < \rhod(\mathcal A).
%\]
%Then, by \eqref{Esperance} and Remark \ref{RemkInvariantProbability},
%\[
%\mathbb E_{(\nu, P)} \left[\norm*{A_{i_{n}} \dotsm A_{i_1}}_{\mathrm B}^{1/{n}}\right] < \rhod(\mathcal A),
%\]
%which implies that $\rhop(P, \mathcal A) < \rhod(\mathcal A)$ according to \eqref{RhoPLimInf}. This concludes the proof of the equivalence between \ref{Lemm-PSC-AIrreducible-Equality} and \ref{Lemm-PSC-AIrreducible-Barabanov}.

%We now establish the equivalence between \ref{Lemm-PSC-AIrreducible-Barabanov} and \ref{Lemm-PSC-AIrreducible-Spr}. Assume first that \ref{Lemm-PSC-AIrreducible-Barabanov} holds. If $(i_1, \dotsc, i_k)$ is a $P$-cycle, we define $\sigma \in \mathfrak S$ by setting $\sigma(j) = i_\ell$ when $j \equiv \ell \mod k$. Then $(\sigma(1), \dotsc, \sigma(rk))$ is a $P$-word of length $rk$ for every $r \in \mathbb N$ and, by hypothesis, one has
%\[
%\norm*{(A_{i_k} \dotsm A_{i_1})^r}_{\mathrm B}^{1/{rk}} = \rhod(\mathcal A).
%\]
%Letting $r \to \infty$ and using Gelfand's formula, one concludes that
%\[
%\rho(A_{i_k} \dotsm A_{i_1})^{1/k} = \rhod(\mathcal A),
%\]
%which yields \ref{Lemm-PSC-AIrreducible-Spr}.

Assume that \ref{Lemm-PSC-AIrreducible-Spr} holds. Fix a $P$-word $(i_1, \dotsc, i_k)$. Since $P$ is strongly connected, there exist $r \in \mathbb N$ and $i_{k+1}, \dotsc, i_r \in \llbracket 1, N\rrbracket$ (obtained by connecting $i_k$ to $i_1$) such that $(i_1, \dotsc, i_r)$ is a $P$-cycle. Then, by \ref{Lemm-PSC-AIrreducible-Spr},
\[
\rho(A_{i_r} \dotsm A_{i_1}) = \rhod(\mathcal A)^r.
\]
Since the spectral radius is a lower bound for any induced norm of a matrix, we obtain that
%\[
%\norm*{A_{i_r} \dotsm A_{i_1}}_{\mathrm B} \geq \rhod(\mathcal A)^r.
%\]
%Then
\[
\rhod(\mathcal A)^r \leq \norm*{A_{i_r} \dotsm A_{i_1}}_{\mathrm B} \leq \norm*{A_{i_r} \dotsm A_{i_{k+1}}}_{\mathrm B} \norm*{A_{i_k} \dotsm A_{i_1}}_{\mathrm B}.
\]
Using the fact that $\norm{\cdot}_{\mathrm B}$ is a Barabanov norm, we also have that
\[
\norm*{A_{i_r} \dotsm A_{i_{k+1}}}_{\mathrm B} \norm*{A_{i_k} \dotsm A_{i_1}}_{\mathrm B} \leq \rhod(\mathcal A)^{r - k} \rhod(\mathcal A)^k = \rhod(\mathcal A)^r.
\]
By combining the previous inequalities, it follows that $\norm*{A_{i_k} \dotsm A_{i_1}}_{\mathrm B} = \rhod(\mathcal A)^k$.
\end{proof}

\begin{remk}
The proof of Lemma~\ref{Lemm-PSC-AIrreducible} only uses that $\norm{\cdot}_{\mathrm B}$ is an extremal norm, i.e., it satisfies \ref{DefiBarabanov-Extremal} in Definition \ref{DefiBarabanov}. The irreducibility assumption on $\mathcal A$ could then be replaced by its nondefectiveness (we refer the reader to \cite[Section 2.1.2]{Jungers2009Joint} for details). However, we prefer to state Lemma~\ref{Lemm-PSC-AIrreducible} in terms of irreducibility since this condition is easier to handle: it can be checked more directly and, up to a linear change of coordinates, a reducible $\mathcal A$ can be put into block-triangular form with irreducible diagonal blocks. This block decomposition is a key argument in the proof of Lemma~\ref{Lemm-PSC}.
\end{remk}

We now consider the case where $\mathcal A$ is not necessarily irreducible. Here, a Barabanov norm for $\mathcal A$ in general does not exist, and hence item \ref{Lemm-PSC-AIrreducible-Barabanov} from Lemma~\ref{Lemm-PSC-AIrreducible} cannot be expected.

\begin{lemm}
\label{Lemm-PSC}
Let $P \in \mathcal M_N(\mathbb R)$ be a stochastic strongly connected matrix and $\mathcal A = (A_1,\allowbreak \dotsc,\allowbreak A_N) \in \mathcal M_d(\mathbb R)^N$. Then the following statements are equivalent:
\begin{enumerate}
\item\label{Lemm-PSC-Equality} $\rhod(\mathcal A) = \rhop(P, \mathcal A)$.
\item\label{Lemm-PSC-Spr} $\rho(A_{i_k} \dotsm A_{i_1})^{1/k} = \rhod(\mathcal A)$ for every $P$-cycle $(i_1, \dotsc, i_k)$.
\end{enumerate}
\end{lemm}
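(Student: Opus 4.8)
The plan is to treat the two implications separately. The implication \ref{Lemm-PSC-Equality} $\Rightarrow$ \ref{Lemm-PSC-Spr} is the easy one: since $P$ is strongly connected, by Remark~\ref{RemkInvariantProbability} it admits a unique invariant probability $\nu$, which has positive entries, so that $\rhop(P, \mathcal A) = \rhop(\nu, P, \mathcal A)$ and the $P$-cycles coincide with the $(\nu, P)$-cycles; the conclusion is then a direct application of Lemma~\ref{LemmMorris}. The bulk of the work is thus the converse implication \ref{Lemm-PSC-Spr} $\Rightarrow$ \ref{Lemm-PSC-Equality}, which I would prove by strong induction on the dimension $d$ (the matrix $P$, hence $N$, being fixed throughout). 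The case $\rhod(\mathcal A) = 0$ is immediate from \eqref{Inequality} and \eqref{RhoDGeqRhoWord}, so assume $\rhod(\mathcal A) > 0$. If $\mathcal A$ is irreducible --- in particular if $d = 1$ --- the statement is exactly the implication \ref{Lemm-PSC-AIrreducible-Spr} $\Rightarrow$ \ref{Lemm-PSC-AIrreducible-Equality} of Lemma~\ref{Lemm-PSC-AIrreducible}, using once more that $\rhop(P, \mathcal A) = \rhop(\nu, P, \mathcal A)$ with $\nu$ the unique invariant probability. This serves as the base case.

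For the inductive step, suppose $\mathcal A$ is reducible and fix a proper nonzero common invariant subspace $W$ of the $A_i$. Since $\rhod$, $\rhop(P, \cdot)$, and the spectral radius are unaffected by a common change of coordinates --- for the first two because they do not depend on the chosen norm --- I may assume that every $A_i$ is block upper triangular, with diagonal blocks $B_i$ of size $d_1 = \dim W$ and $D_i$ of size $d_2 = d - d_1$, and $d_1, d_2 < d$. Write $\mathcal B = (B_1, \dotsc, B_N)$ and $\mathcal D = (D_1, \dotsc, D_N)$. For every $P$-word $(i_1, \dotsc, i_n)$, the product $A_{i_n} \dotsm A_{i_1}$ is block upper triangular with diagonal blocks $B_{i_n} \dotsm B_{i_1}$ (the restriction to $W$) and $D_{i_n} \dotsm D_{i_1}$ (the map induced on $\mathbb R^d / W$); choosing on $\mathbb R^d$ an arbitrary norm and on $W$, respectively $\mathbb R^d / W$, the restricted, respectively quotient, norm, one has $\norm{A_{i_n} \dotsm A_{i_1}} \geq \norm{B_{i_n} \dotsm B_{i_1}}$ and $\norm{A_{i_n} \dotsm A_{i_1}} \geq \norm{D_{i_n} \dotsm D_{i_1}}$, whence, by taking expectations and $n$-th roots in \eqref{EqDefiLambdaP}, letting $n \to \infty$, and using the norm-independence of $\rhop$, $\rhop(P, \mathcal A) \geq \rhop(P, \mathcal B)$ and $\rhop(P, \mathcal A) \geq \rhop(P, \mathcal D)$. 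Finally, $\rhod(\mathcal A) = \max\left(\rhod(\mathcal B), \rhod(\mathcal D)\right)$, a standard identity for block-triangular families (see \cite{Jungers2009Joint}).

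It therefore suffices to exhibit one of the blocks $\mathcal E \in \{\mathcal B, \mathcal D\}$ with $\rhop(P, \mathcal E) = \rhod(\mathcal A)$, for then $\rhod(\mathcal A) \geq \rhop(P, \mathcal A) \geq \rhop(P, \mathcal E) = \rhod(\mathcal A)$, i.e., \ref{Lemm-PSC-Equality} holds. This is where hypothesis \ref{Lemm-PSC-Spr} enters, through a case split on whether $\rhod(\mathcal B) \leq \rhod(\mathcal D)$ or $\rhod(\mathcal D) < \rhod(\mathcal B)$. In the first case $\rhod(\mathcal D) = \rhod(\mathcal A)$, and for every $P$-cycle $(i_1, \dotsc, i_k)$ one has $\rho(B_{i_k} \dotsm B_{i_1})^{1/k} \leq \rhod(\mathcal B) \leq \rhod(\mathcal A)$ by \eqref{RhoDGeqRhoWord} applied to $\mathcal B$, while $\max\left(\rho(B_{i_k} \dotsm B_{i_1}), \rho(D_{i_k} \dotsm D_{i_1})\right) = \rho(A_{i_k} \dotsm A_{i_1}) = \rhod(\mathcal A)^k$ by \ref{Lemm-PSC-Spr} and the block-triangular structure; hence $\rho(D_{i_k} \dotsm D_{i_1})^{1/k} = \rhod(\mathcal A) = \rhod(\mathcal D)$, so $\mathcal D$ satisfies \ref{Lemm-PSC-Spr}, and since $d_2 < d$ the induction hypothesis gives $\rhop(P, \mathcal D) = \rhod(\mathcal D) = \rhod(\mathcal A)$; take $\mathcal E = \mathcal D$. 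In the second case $\rhod(\mathcal B) = \rhod(\mathcal A)$, and for every $P$-cycle $(i_1, \dotsc, i_k)$ one has $\rho(D_{i_k} \dotsm D_{i_1})^{1/k} \leq \rhod(\mathcal D) < \rhod(\mathcal A)$, so the maximum above is attained on the other block and $\rho(B_{i_k} \dotsm B_{i_1})^{1/k} = \rhod(\mathcal A) = \rhod(\mathcal B)$; thus $\mathcal B$ satisfies \ref{Lemm-PSC-Spr}, and since $d_1 < d$ the induction hypothesis gives $\rhop(P, \mathcal B) = \rhod(\mathcal A)$; take $\mathcal E = \mathcal B$. This closes the induction.

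The step I expect to be delicate is precisely this last one: a priori the diagonal block on which the spectral radius $\rho(A_{i_k} \dotsm A_{i_1})$ is realized could vary from one $P$-cycle to another, whereas the induction needs a single block to be maximizing for \emph{all} $P$-cycles at once. The comparison of $\rhod(\mathcal B)$ with $\rhod(\mathcal D)$ resolves this, since in each case one block is uniformly too small --- its cyclic spectral radii being bounded by its own joint spectral radius via \eqref{RhoDGeqRhoWord} --- forcing the other block to carry the maximum for every cycle simultaneously. A secondary point to be careful about is that only the easy inequality $\rhop(P, \mathcal A) \geq \rhop(P, \mathcal E)$ (a restriction or quotient has no larger norm) is used, so that no full statement about reading $\rhop$ on the diagonal blocks needs to be proved.
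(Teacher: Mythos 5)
Your easy implication and your base case are fine, and the restriction/quotient-norm argument giving $\rhop(P,\mathcal A)\geq\rhop(P,\mathcal B)$ and $\rhop(P,\mathcal A)\geq\rhop(P,\mathcal D)$ is a clean way to avoid invoking the full ``$\rhop$ is read on the diagonal blocks'' statement. But the inductive step has a genuine gap, exactly at the point you flag as delicate. In your first case you only have $\rho(B_{i_k}\dotsm B_{i_1})^{1/k}\leq\rhod(\mathcal B)\leq\rhod(\mathcal A)$ with a \emph{non-strict} inequality, and from $a\leq M$ together with $\max(a,b)=M$ you cannot conclude $b=M$. Your dichotomy only works when one of the two blocks has joint spectral radius strictly below $\rhod(\mathcal A)$; when $\rhod(\mathcal B)=\rhod(\mathcal D)=\rhod(\mathcal A)$ (which is the generic hard situation, e.g.\ $\mathcal A$ block-diagonal with two blocks of equal joint spectral radius), neither block is ``uniformly too small'', and hypothesis \ref{Lemm-PSC-Spr} a priori allows the maximizing block to change from one $P$-cycle to another. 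Concretely, with $A_1=\operatorname{diag}(1,\tfrac12)$, $A_2=\operatorname{diag}(\tfrac12,1)$, the cycle $(1)$ is maximized on the first block and the cycle $(2)$ on the second, so the implication ``$\rho(B(i))\leq\rhod(\mathcal A)^{k}$ for all cycles and $\max=\rhod(\mathcal A)^{k}$, hence $\rho(D(i))=\rhod(\mathcal A)^{k}$ for all cycles'' is simply false as a deduction; whether \ref{Lemm-PSC-Spr} \emph{globally} forces a single block to work for all cycles is precisely what has to be proved.

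This is the crux that the paper's proof addresses and yours does not. The paper decomposes $\mathcal A$ all the way into \emph{irreducible} diagonal blocks $\mathcal A^{(1)},\dotsc,\mathcal A^{(R)}$, assumes for contradiction that every block $r$ fails on some $P$-cycle $i^r$ (i.e.\ $\rho(A^{(r)}(i^r))<\rhod(\mathcal A)^{\abs{i^r}}$), concatenates these cycles with connecting $P$-words and large powers $n$ into a single $P$-cycle, and uses the Barabanov norm of the irreducible block $\overline r$ that realizes the spectral radius along a subsequence to bound $1\leq\norm{A^{(\overline r)}(i^{\overline r})^{n_q}}_{\overline r}\to 0$, a contradiction. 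Note that patching your induction by inserting such a concatenation argument for the two blocks $\mathcal B,\mathcal D$ does not immediately work either, because $\mathcal B$ and $\mathcal D$ need not be irreducible, so no extremal norm is available for them; this is why the paper works with the full decomposition into irreducible blocks rather than a single invariant subspace.
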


\begin{proof}
Before giving the core of the argument, we start with a set of remarks. First, up to a linear change of coordinates, $A_1, \dotsc, A_N$ can be presented in block-tri\-an\-gu\-lar form as
\begin{equation*}
%\label{DecomposeAj}
A_j = \begin{pmatrix}
A_{j}^{(1)} & \ast & \ast & \cdots & \ast \\
0 & A_{j}^{(2)} & \ast & \cdots & \ast \\
0 & 0 & A_{j}^{(3)} & \cdots & \ast \\
\vdots & \vdots & \vdots & \ddots & \vdots \\
0 & 0 & 0 & \cdots & A_{j}^{(R)}
\end{pmatrix}, \qquad j \in \llbracket 1, N\rrbracket,
\end{equation*}
with $\mathcal A^{(r)} = (A_1^{(r)}, \dotsc, A_N^{(r)})$ irreducible for every $r \in \llbracket 1, R\rrbracket$. Remark that, on the one hand, according to \cite[Proposition 1.5]{Jungers2009Joint}, we have $\rhod(\mathcal A) = \max_{r \in \llbracket 1, R\rrbracket} \rhod(\mathcal A^{(r)})$ and, on the other hand, it follows from \cite[Theorem 1.1]{Gerencser2008Stability} and the strong connectedness of $P$ that $\rhop(P, \mathcal A) = \max_{r \in \llbracket 1, R\rrbracket} \allowbreak \rhop(\allowbreak P,\allowbreak \mathcal A^{(r)})$. Moreover, for every $P$-cycle $(i_1, \dotsc, i_k)$, we have
\begin{equation}
\label{RhoCycleMaxDiagonal}
\rhod(\mathcal A) \geq \rho(A_{i_k} \dotsm A_{i_1})^{1/k} = \max_{r \in \llbracket 1, R\rrbracket} \rho\left(A_{i_k}^{(r)} \dotsm A_{i_1}^{(r)}\right)^{1/k},
\end{equation}
where the inequality comes from \eqref{RhoDGeqRhoWord} and the equality results from the simple fact that the spectral radius of a block-triangular matrix is equal to the maximum of the spectral radii over the diagonal blocks.

Since \ref{Lemm-PSC-Equality} implies \ref{Lemm-PSC-Spr} by Lemma~\ref{LemmMorris}, we are left to prove the converse implication.
%
%We start by proving that \ref{Lemm-PSC-Equality} implies \ref{Lemm-PSC-Spr}. Let $\overline r \in \llbracket 1, N\rrbracket$ be such that $\rhop(P, \mathcal A^{(\overline r)}) = \max_{r \in \llbracket 1, N\rrbracket}\rhop(P, \mathcal A^{(r)}) = \rhop(P, \mathcal A)$. Then, by \ref{Lemm-PSC-Equality} and the previous remarks, one has
%\[\rhop(P, \mathcal A^{(\overline r)}) = \rhod(\mathcal A) = \max_{r \in \llbracket 1, N\rrbracket} \rhod(\mathcal A^{(r)}) \geq \rhod(\mathcal A^{(\overline r)}),\]
%and one obtains from \eqref{Inequality} that $\rhop(P, \mathcal A^{(\overline r)}) = \rhod(\mathcal A^{(\overline r)}) = \rhod(\mathcal A)$. By Lemma~\ref{Lemm-PSC-AIrreducible}, for every $P$-cycle $(i_1, \dotsc, i_k)$, one has, using \eqref{RhoCycleMaxDiagonal},
%\[
%\rhod(\mathcal A) = \rhod(\mathcal A^{(\overline r)}) = \rho\left(A_{i_k}^{(\overline r)} \dotsm A_{i_1}^{(\overline r)}\right)^{1/k} \leq \rho\left(A_{i_k} \dotsm A_{i_1}\right)^{1/k} \leq \rhod(\mathcal A).
%\]
%Then $\rho\left(A_{i_k} \dotsm A_{i_1}\right)^{1/k} = \rhod(\mathcal A)$.
%
Assume that \ref{Lemm-PSC-Spr} holds true. Then \ref{Lemm-PSC-Equality} holds trivially if $\rhod(\mathcal A) = 0$. Otherwise, we can assume, with no loss of generality, that $\rhod(\mathcal A) = 1$ up to replacing $\mathcal A$ by $\rhod(\mathcal A)^{-1} \mathcal A$. By assumption and \eqref{RhoCycleMaxDiagonal}, for every $P$-cycle $(i_1, \dotsc, i_k)$, there exists $r \in \llbracket 1, R\rrbracket$ such that $\rho\left(A_{i_k}^{(r)} \dotsm A_{i_1}^{(r)}\right) = 1$.

We claim that $r$ can be chosen independently of the $P$-cycle. We argue by contradiction, i.e., we assume that, for every $r \in \llbracket 1, R\rrbracket$, there exists a $P$-cycle $i^r = (i_1^r, \dotsc, i_{\ell_r}^r)$ such that $\rho(A^{(r)}(i^r)) < 1$. Let $j^r = (j_1^r, \dotsc, j_{k_r}^r)$ be a $P$-word such that $j_1^r = i_1^r$ and $p_{j_{k_r}^r i_1^{r+1}} > 0$ (with the convention that $i_1^{R+1} = i_1^1$). Then, for every $n \in \mathbb N$,
\[
A(j^R) A(i^R)^n \dotsm A(j^2) A(i^2)^n A(j^1) A(i^1)^n \in C(P).
\]
For every $n$, we apply \ref{Lemm-PSC-Spr} to the above product, and we deduce from \eqref{RhoCycleMaxDiagonal} that there exists $r_n \in \llbracket 1, R\rrbracket$ such that
\begin{multline*}
\rho\left(A^{(r_n)}(j^R) A^{(r_n)}(i^R)^{n} \dotsm A^{(r_n)}(j^2) A^{(r_n)}(i^2)^{n} A^{(r_n)}(j^1) A^{(r_n)}(i^1)^{n}\right) \\
 = \rho\left(A(j^R) A(i^R)^{n} \dotsm A(j^2) A(i^2)^{n} A(j^1) A(i^1)^{n}\right) = 1.
\end{multline*}
Let $(n_q)_{q \in \mathbb N}$ be an increasing sequence  such that there exists $\overline r \in \llbracket 1, R\rrbracket$ with $r_{n_q} = \overline r$ for every $q \in \mathbb N$. 
%\begin{align*}
%& \rho\left(A^{(r)}(j^R) A^{(r)}(i^R)^{n_q} \dotsm A^{(r)}(j^2) A^{(r)}(i^2)^{n_q} A^{(r)}(j^1) A^{(r)}(i^1)^{n_q}\right) \\
%{} = {} & \rho\left(A(j^R) A(i^R)^{n_q} \dotsm A(j^2) A(i^2)^{n_q} A(j^1) A(i^1)^{n_q}\right) = 1.
%\end{align*}
Since $\mathcal A^{(\overline r)}$ is irreducible, there exists a Barabanov norm $\norm{\cdot}_{\overline r}$ for $\mathcal A^{(\overline r)}$. Then, for every $q \in \mathbb N$, we have
\begin{align*}
1 & = \rho\left(A^{(\overline r)}(j^R) A^{(\overline r)}(i^R)^{n_q} \dotsm A^{(\overline r)}(j^2) A^{(\overline r)}(i^2)^{n_q} A^{(\overline r)}(j^1) A^{(\overline r)}(i^1)^{n_q}\right) \\
& \leq \norm*{A^{(\overline r)}(j^R) A^{(\overline r)}(i^R)^{n_q} \dotsm A^{(\overline r)}(j^2) A^{(\overline r)}(i^2)^{n_q} A^{(\overline r)}(j^1) A^{(\overline r)}(i^1)^{n_q}}_{\overline r} \\
& \leq \norm*{A^{(\overline r)}(i^{\overline r})^{n_q}}_{\overline r},
\end{align*}
where the last inequality follows from the fact that $\norm{\cdot}_{\overline r}$ is a Barabanov norm. Since $\rho(A^{(\overline r)}(i^{\overline r})) < 1$, we have that $\norm*{A^{(\overline r)}(i^{\overline r})^{n_q}}_{\overline r} \xrightarrow[q \to \infty]{} 0$, hence the contradiction.

We thus have proved that there exists $\overline r \in \llbracket 1, R\rrbracket$ such that, for every $P$-cycle $(i_1, \dotsc, i_k)$,
\[
\rho\left(A_{i_k}^{(\overline r)} \dotsm A_{i_1}^{(\overline r)}\right) = 1 = \rhod(\mathcal A).
\]
On the other hand, by \eqref{RhoDGeqRhoWord}, we have $\rho\left(A_{i_k}^{(\overline r)} \dotsm A_{i_1}^{(\overline r)}\right) \leq \rhod(\mathcal A^{(\overline r)})$. Since $\rhod(\mathcal A^{(\overline r)}) \leq \rhod(\mathcal A)$, we deduce that
\[
\rho\left(A_{i_k}^{(\overline r)} \dotsm A_{i_1}^{(\overline r)}\right) = \rhod(\mathcal A^{(\overline r)}) = \rhod(\mathcal A)
\]
for every $P$-cycle $(i_1, \dotsc, i_k)$. Then, using Lemma~\ref{Lemm-PSC-AIrreducible}, we obtain that 
\[\rhop(P, \mathcal A) \geq \rhop(P, \mathcal A^{(\overline r)}) = \rhod(\mathcal A^{(\overline r)}) = \rhod(\mathcal A),\]
and then \ref{Lemm-PSC-Equality} holds thanks to \eqref{Inequality}.
\end{proof}

We can conclude now the proof of Theorem \ref{MainTheoFixedP}.

\begin{proof}[Proof of Theorem \ref{MainTheoFixedP}]
Recall that, thanks to Lemma~\ref{LemmMorris}, we are only left to prove that \ref{MainTheoFixedP-Spr} implies \ref{MainTheoFixedP-Equality}. We first decompose $P$ and $\nu$ according to Proposition \ref{PropStochasticDecomposition} and use in the sequel the same notations as in its statement. Thanks to \eqref{EqDefiLambdaP} and \eqref{Esperance}, we have
\begin{equation}
\label{LambdaPConvexInNu}
\rhop(\nu, P, \mathcal A) = \sum_{j=1}^R \alpha_j \rhop(\nu^{[j]}, P, \mathcal A).
\end{equation}
For $j \in \llbracket 1, R\rrbracket$, let $\mathcal A^{[j]}$ be the ordered $n_j$-tuple made of the matrices $A_\ell$ such that $\nu^{[j]}_\ell > 0$. Notice that $\rhop(\nu^{[j]}, P_j, \mathcal A^{[j]}) = \rhop(\nu^{[j]}, P, \mathcal A)$ for every $j \in \llbracket 1, R\rrbracket$. Using \eqref{Inequality} and the fact that $\mathcal A^{[j]}$ is made of matrices from $\mathcal A$, we obtain that, for every $j \in \llbracket 1, R\rrbracket$,
\begin{equation}
\label{InegLambdaPNuJ}
\rhop(\nu^{[j]}, P_j, \mathcal A^{[j]}) \leq \rhod(\mathcal A^{[j]}) \leq \rhod(\mathcal A).
\end{equation}

%We now prove that \ref{MainTheoFixedP-Equality} implies \ref{MainTheoFixedP-Spr}. Using \eqref{LambdaPConvexInNu}, \eqref{InegLambdaPNuJ}, and \ref{MainTheoFixedP-Equality}, one deduces that
%\[\rhop(\nu^{[j]}, P_j, \mathcal A^{[j]}) = \rhod(\mathcal A^{[j]}) = \rhod(\mathcal A) \qquad \text{for every }j \in \llbracket 1, R\rrbracket \text{ such that } \alpha_j > 0.\]
%Let $(i_1, \dotsc, i_k)$ be a $(\nu, P)$-cycle. Then, letting $\mathcal I_1, \dotsc, \mathcal I_R$ be as in the statement of Lem\-ma~\ref{LemmDecomposeCycle}, one obtains that $i_1, \dotsc, i_k$ are in $\mathcal I_\ell$ for some $\ell \in \llbracket 1, R\rrbracket$. Moreover, since $\nu_{i_1} > 0$, one obtains that $\alpha_\ell > 0$. Hence, since $\rhop(\nu^{[\ell]}, P_\ell, \mathcal A^{[\ell]}) = \rhod(\mathcal A^{[\ell]})$, one concludes from Lemma~\ref{Lemm-PSC} that $\rho\left(A_{i_k} \dotsm A_{i_1}\right)^{1/k} = \rhod(\mathcal A)$, as required.

Let $\mathcal I_i$ be defined for $i \in \llbracket 1, R\rrbracket$ as in Lemma~\ref{LemmDecomposeCycle} and let $j \in \llbracket 1, R\rrbracket$ be such that $\alpha_j > 0$. Thanks to Lemma~\ref{LemmDecomposeCycle}, there exists a $(\nu, P)$-cycle $(i_1, \dotsc, i_k)$ with $i_1, \dotsc, i_k$ in $\mathcal I_j$. Then, by \eqref{RhoDGeqRhoWord}, \eqref{InegLambdaPNuJ}, and \ref{MainTheoFixedP-Spr}, we have
\[
\rhod(\mathcal A^{[j]}) \leq \rhod(\mathcal A) = \rho(A_{i_k} \dotsm A_{i_1})^{1/k} \leq \rhod(\mathcal A^{[j]}).
\]
In particular, $\rhod(\mathcal A) = \rhod(\mathcal A^{[j]})$ and $\rho(A_{i_k} \dotsm A_{i_1})^{1/k} = \rhod(\mathcal A^{[j]})$. Lemma~\ref{Lemm-PSC} applied to $P_j$ and $\mathcal A^{[j]}$ yields that $\rhop(\nu^{[j]}, P_j, \mathcal A^{[j]}) = \rhod(\mathcal A^{[j]})$. Hence $\rhop(\nu^{[j]}, P_j,\allowbreak \mathcal A^{[j]}) = \rhod(\mathcal A)$, and, since this holds for every $j \in \llbracket 1, R\rrbracket$ such that $\alpha_j > 0$, it follows from \eqref{LambdaPConvexInNu} that $\rhop(\nu, P, \mathcal A) = \rhod(\mathcal A)$, as required.
\end{proof}

\begin{remk}
Theorem~\ref{MainTheoFixedP} and Lemmas~\ref{Lemm-PSC-AIrreducible} and \ref{Lemm-PSC} characterize equality between deterministic and probabilistic joint spectral radii in terms of $P$-cycles and $(\nu, P)$-cycles only, and hence only on $\ceil*{P}$ and $\ceil{\nu}$ (see Remark~\ref{RemkStronglyConnected}). In other words, equality in Theorem~\ref{MainTheoFixedP}\ref{MainTheoFixedP-Equality} depends only on the graph associated with the Markov chain and the possible choices of initial states, but not on the precise values of the non-zero initial and transition probabilities.
\end{remk}

\subsection{Geometric characterization of equality between \texorpdfstring{$\rhod(\mathcal A)$}{rho d(A)} and \texorpdfstring{$\rhop(P,\allowbreak \mathcal A)$}{rho p(P, A)}}
\label{SecGeometric}

It is clear from Theorem \ref{MainTheoFixedP} that equality between $\rhod(\mathcal A)$ and $\rhop(P, \mathcal A)$ is possible only for restricted choices of $\mathcal A$. The goal of this section is to provide a more precise description of such choices of $\mathcal A$ using results from \cite{Protasov2017Matrix}, where the authors classify matrix semigroups of constant spectral radius. We start with the following proposition.

\begin{prop}
\label{PropOrthogonal1}
Let $P \in \mathcal M_N(\mathbb R)$ be a stochastic strongly connected matrix and $\mathcal A = (A_1, \dotsc,\allowbreak A_N) \allowbreak \in \mathcal M_d(\mathbb R)^N$ be such that $\rhod(\mathcal A) = \rhop(P, \mathcal A)$. Assume that there exists $s \in \llbracket 1, N\rrbracket$ such that $C(P, s)$ is irreducible. Then there exists an invertible matrix $G \in \mathcal M_d(\mathbb R)$ such that, for every $P$-cycle $i$ starting at $s$, either $A(i)$ is singular or $\rhod(\mathcal A)^{-k} G A(i) G^{-1}$ is orthogonal, where $k$ is the length of $i$.
\end{prop}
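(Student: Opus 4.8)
The strategy is to recognize $C(P,s)$ as an irreducible multiplicative semigroup all of whose elements share the same spectral radius, and then to invoke the classification of such semigroups from \cite{Protasov2017Matrix}. Before doing so, two preliminary reductions are in order. If $\rhod(\mathcal A)=0$, then by \eqref{RhoDGeqRhoWord} every $P$-cycle $i$ of length $k$ satisfies $\rho(A(i))\le\rhod(\mathcal A)^k=0$, so $A(i)$ is singular and the conclusion holds with any invertible $G$; hence we may assume $\rhod(\mathcal A)>0$. Replacing $\mathcal A$ by $\rhod(\mathcal A)^{-1}\mathcal A$ does not affect the strong connectedness of $P$, the equality $\rhod(\mathcal A)=\rhop(P,\mathcal A)$, the irreducibility of $C(P,s)$ (its elements are merely rescaled by nonzero scalars), nor whether a given $A(i)$ is singular, while it turns the target quantity $\rhod(\mathcal A)^{-k}GA(i)G^{-1}$ into $GA(i)G^{-1}$; so we may and do assume $\rhod(\mathcal A)=1$.

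Next I would check the two properties that feed the classification theorem. First, $C(P,s)$ is nonempty because $P$ is strongly connected, and it is a multiplicative semigroup: if $(s,i_2,\dotsc,i_k)$ and $(s,j_2,\dotsc,j_m)$ are $P$-cycles, then, since the transition probabilities from $i_k$ and from $j_m$ back to $s$ are positive, the concatenation $(s,i_2,\dotsc,i_k,s,j_2,\dotsc,j_m)$ is again a $P$-cycle starting at $s$, and the associated matrix product equals $A(s,j_2,\dotsc,j_m)\,A(s,i_2,\dotsc,i_k)$. Second, since $\rhod(\mathcal A)=\rhop(P,\mathcal A)$ and $P$ is strongly connected, Lemma~\ref{Lemm-PSC} yields $\rho(A(i))^{1/k}=\rhod(\mathcal A)=1$ for every $P$-cycle $i$ of length $k$; in particular $\rho(M)=1$ for every $M\in C(P,s)$.

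We have thus exhibited $C(P,s)$ as an irreducible multiplicative semigroup of matrices with constant spectral radius equal to $1$. The classification in \cite{Protasov2017Matrix} then provides an invertible $G\in\mathcal M_d(\mathbb R)$ such that every $M\in C(P,s)$ is either singular or is conjugated by $G$ to an orthogonal matrix. Specializing to $M=A(i)$ for an arbitrary $P$-cycle $i$ starting at $s$, of length $k$, and using that $A(i)$ is singular if and only if $GA(i)G^{-1}$ is, we obtain that either $A(i)$ is singular or $GA(i)G^{-1}$ is orthogonal; undoing the normalization of the first paragraph replaces $GA(i)G^{-1}$ by $\rhod(\mathcal A)^{-k}GA(i)G^{-1}$, which is the assertion. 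I expect the substantive step to be, not the routine reductions, but the correct invocation of \cite{Protasov2017Matrix}: one must verify that ``irreducible semigroup of constant spectral radius'' is exactly the hypothesis of the structure theorem there --- in particular, that this hypothesis already forces boundedness of the semigroup, and that the conjugating matrix and the orthogonal form may be taken over $\mathbb R$ --- all of which is contained in that reference.
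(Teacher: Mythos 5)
Your proof is correct and follows essentially the same route as the paper: both use Lemma~\ref{Lemm-PSC} to show that $C(P,s)$ (suitably normalized) is an irreducible matrix semigroup of constant spectral radius and then invoke \cite[Theorem~2]{Protasov2017Matrix}. Your treatment of the degenerate case ($\rhod(\mathcal A)=0$ versus the paper's ``all $A(i)$ singular'') and the explicit verification of the semigroup property are just more detailed versions of steps the paper leaves implicit.
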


\begin{proof}
We only have to provide an argument if there exists a $P$-cycle $i_\ast$ starting at $s$ such that $A(i_\ast)$ is invertible. In that case, from \eqref{RhoDGeqRhoWord}, $\rhod(\mathcal A) \geq \rho(A(i_\ast))^{1/k_\ast} > 0$, where $k_\ast$ is the length of $i_\ast$. From Lemma~\ref{Lemm-PSC}, the set
\[\{\rhod(\mathcal A)^{-k} A(i) \mid k \in \mathbb N,\; i \text{ is a $P$-cycle starting at $s$ of }\allowbreak\text{length $k$}\}\]
is a matrix semigroup with constant spectral radius. Since, moreover, this semigroup is also irreducible, the conclusion follows from \cite[Theorem 2]{Protasov2017Matrix}.
\end{proof}

\begin{remk}
As remarked in \cite{Protasov2017Matrix}, the problem of classifying matrix semigroups with constant spectral radius is highly nontrivial when the semigroup contains singular matrices. By using additional results from \cite{Protasov2017Matrix}, we may obtain, under the assumptions of Proposition \ref{PropOrthogonal1}, properties on $\rhod(\mathcal A)^{-k} G A(i) G^{-1}$ that are weaker than orthogonality but apply to all matrices $A(i) \in C(P, s)$, and not only nonsingular ones. We refer the interested reader to \cite[Theorem 3 and Corollary 6]{Protasov2017Matrix}.
\end{remk}

%\begin{remk}
%In general, the matrix $G$ in the statement of Proposition \ref{PropOrthogonal1} cannot be chosen independently of $s$. Indeed, take $N = 2$, $P = \begin{pmatrix}0 & 1 \\ 1 & 0 \\ \end{pmatrix}$, and choose $A_1, A_2 \in \mathcal M_2(\mathbb R)$ such that $A_2 A_1$ is not orthogonal and $A_1 A_2 = R_\theta$, the rotation of angle $\theta$, with $\frac{\theta}{\pi}$ irrational. In that case, $C(P, 1)$ and $C(P, 2)$ are the semigroups generated by $A_2 A_1$ and $A_1 A_2$, respectively. Then there does not exist a common basis where $A_1 A_2$ and $A_2 A_1$ are orthogonal. To see that, assume by contradiction that there exists an invertible matrix $G$ such that both $G^{-1} A_2 A_1 G$ and $G^{-1} R_{\theta} G$ are orthogonal. The latter condition implies that $G^{-1} U G$ is orthogonal for every orthogonal matrix $U$, implying that $G = \rho \id_2$ for some $\rho \neq 0$. Hence $A_2 A_1$ is orthogonal, which gives the desired contradiction.
%\end{remk}

A limitation of Proposition \ref{PropOrthogonal1} lies in the fact that, in general, given a stochastic and strongly connected matrix $P$, it is a nontrivial task to verify the existence of an index $s$ such that $C(P, s)$ is irreducible, even if $\mathcal A$ is itself irreducible. However, this is true if one assumes in addition that $\mathcal A$ contains only invertible matrices and that all diagonal elements of $P$ are positive, in which case we have the following proposition.

\begin{prop}
\label{PropPPositiveDiagonal}
Let $P \in \mathcal M_d(\mathbb R)$ be a stochastic strongly connected matrix with positive diagonal entries and $\mathcal A = (A_1, \dotsc,\allowbreak A_N) \allowbreak \in \mathcal M_d(\mathbb R)^N$ be irreducible with $A_1, \dotsc, A_N$ invertible. Then, for every $s \in \llbracket 1, N\rrbracket$, $C(P, s)$ is irreducible. Moreover, $\rhod(\mathcal A) = \rhop(P, \mathcal A)$ if and only if there exists an invertible matrix $G \in \mathcal M_d(\mathbb R)$ such that, for every $i \in \llbracket 1, N\rrbracket$, $\rhod(\mathcal A)^{-1} G A_i G^{-1}$ is orthogonal.
\end{prop}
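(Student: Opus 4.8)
The plan is to prove the two assertions separately: first that $C(P,s)$ is irreducible for every $s$, and then the equivalence, for which the forward implication rests on Proposition~\ref{PropOrthogonal1} together with a propagation argument that crucially exploits the positivity of the diagonal of $P$.

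\emph{Irreducibility of $C(P,s)$.} Fix $s$ and let $V\subseteq\mathbb R^d$ be a nonzero subspace invariant under every matrix of $C(P,s)$; since all $A_i$ are invertible, so is every element of $C(P,s)$, hence $MV=V$ for all $M\in C(P,s)$. For each $j\in\llbracket 1,N\rrbracket$ I would set $V_j:=A(w)\,V$, where $w$ is any $P$-word from $s$ to $j$ (one exists because $P$ is strongly connected). This does not depend on $w$: if $w_1,w_2$ are two such words and $\tau$ is a $P$-word from $j$ to $s$, concatenating $w_\ell$ with $\tau$ (identifying the shared index $j$) is a $P$-cycle with starting index $s$ whose matrix is $Q\,A(w_\ell)$ for a fixed invertible $Q$ independent of $\ell$, so $Q\,A(w_\ell)V=V$ forces $A(w_1)V=Q^{-1}V=A(w_2)V$. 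Now, using that $p_{jj}>0$ for all $j$: (i)~$V_s=V$, taking $w=(s)$, which is a $P$-cycle, so $A_sV=V$; (ii)~$V_j$ is $A_j$-invariant, since appending a self-loop at $j$ to a $P$-word $w$ from $s$ to $j$ gives $A_jA(w)V=V_j$, i.e.\ $A_jV_j=V_j$; (iii)~if $p_{jj'}>0$, appending $j'$ to $w$ gives $V_{j'}=A_{j'}V_j$, which together with $A_{j'}V_{j'}=V_{j'}$ and invertibility of $A_{j'}$ yields $V_j=V_{j'}$. As $G_P$ is connected, all the $V_j$ equal a common subspace $U=V_s=V$, and $A_jU=A_jV_j=V_j=U$ for every $j$; thus $V=U$ is invariant under $\mathcal A$, so by irreducibility $V=\mathbb R^d$. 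Hence $C(P,s)$ is irreducible.

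\emph{The equivalence, ``if''.} Suppose such a $G$ exists and set $O_i:=\rhod(\mathcal A)^{-1}GA_iG^{-1}$, which is orthogonal; note $\rhod(\mathcal A)\ge\rho(A_1)>0$ since $A_1$ is invertible. Using on $\mathbb R^d$ the Euclidean norm transported by $G$, every product satisfies $\norm*{A_{i_n}\dotsm A_{i_1}}=\rhod(\mathcal A)^n\norm*{O_{i_n}\dotsm O_{i_1}}=\rhod(\mathcal A)^n$ for the induced matrix norm, so $\mathbb E_{(\nu,P)}\!\left[\norm*{A_{i_n}\dotsm A_{i_1}}^{1/n}\right]=\rhod(\mathcal A)$ for every $n$ and every invariant probability $\nu$ of $P$. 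By \eqref{EqDefiLambdaP} (which is norm-independent) this gives $\rhop(\nu,P,\mathcal A)=\rhod(\mathcal A)$ for all $\nu$, hence $\rhop(P,\mathcal A)=\rhod(\mathcal A)$.

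\emph{The equivalence, ``only if''.} Assume $\rhod(\mathcal A)=\rhop(P,\mathcal A)$. Since $C(P,s)$ is irreducible and consists of invertible matrices, Proposition~\ref{PropOrthogonal1} supplies an invertible $G$ such that $\rhod(\mathcal A)^{-k}GA(i)G^{-1}$ is orthogonal for every $P$-cycle $i$ with starting index $s$ (the singular alternative never occurs). Set $A'_v:=\rhod(\mathcal A)^{-1}GA_vG^{-1}$, so that $A'(i_1,\dotsc,i_k):=A'_{i_k}\dotsm A'_{i_1}$ is orthogonal for every $P$-cycle $(i_1,\dotsc,i_k)$ with $i_1=s$; I must show each $A'_v$ is orthogonal. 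I would argue by induction on the distance $m$ from $v$ to $s$ in $G_P$ (finite by strong connectedness). Given $v$, pick a shortest path $v\to w_1\to\dotsm\to w_{m-1}\to s$ in $G_P$, and let $c$ be the $P$-cycle with starting index $s$ obtained by following any simple path from $s$ to $v$ and then this shortest path back to $s$. Inserting $n\ge 0$ extra self-loops at $v$ (legitimate since $p_{vv}>0$) produces $P$-cycles $c_n$ with $A'(c_n)=R\,(A'_v)^{n+1}\,L$, where $R:=A'_{w_{m-1}}\dotsm A'_{w_1}$ (the empty product if $m=1$, so $R=\mathrm{Id}$) and $L$ is invertible, both independent of $n$. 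Since $A'(c_0)$ and $A'(c_1)$ are orthogonal, so is $A'(c_1)A'(c_0)^{-1}=R\,A'_v\,R^{-1}$; and since each $w_i$ is strictly closer to $s$, the induction hypothesis makes $R$ orthogonal, whence $A'_v=R^{-1}(R\,A'_v\,R^{-1})R$ is orthogonal. The base case $v=s$ is immediate because $(s)$ is a $P$-cycle with starting index $s$. This proves the ``only if'' implication and completes the proof.

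\emph{Main obstacle.} The delicate step is the last induction, which is precisely what upgrades the conclusion of Proposition~\ref{PropOrthogonal1} (orthogonality of \emph{cycle products} only) to orthogonality of the \emph{individual} matrices $A'_v$; the trick of isolating $A_v$ by inserting self-loops, and of walking from $v$ back to $s$ along a geodesic of $G_P$ so that the ``tail factor'' $R$ is already known to be orthogonal, works only because every $p_{vv}$ is positive, which is also exactly what drives the proof that $C(P,s)$ is irreducible.
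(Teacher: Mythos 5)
Your proof is correct, and every step checks out: the invariant-subspace argument for the irreducibility of $C(P,s)$ (using that invertibility turns invariance $MV\subseteq V$ into $MV=V$, and that $p_{jj}>0$ makes each singleton a $P$-cycle), the direct norm computation for the ``if'' direction, and the geodesic induction with inserted self-loops for the ``only if'' direction are all sound. The underlying mechanism is the same as the paper's — positivity of the diagonal lets one insert self-loops into cycles and thereby isolate individual generators — but the organization differs in a way worth noting. The paper proves a single algebraic fact once and for all: every $A_i$ belongs to the group $\widetilde C(P,s)$ generated by $C(P,s)$, established via identities such as \eqref{InductionPositiveDiagonal} along one $P$-cycle through $s$ visiting all indices. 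Both of your bespoke arguments then become formal consequences: a group containing an irreducible family is irreducible (which is your subspace-transport argument in disguise), and a group generated by orthogonal matrices consists of orthogonal matrices (which is what your induction along geodesics re-derives, since $R\,A'_v\,R^{-1}=A'(c_1)A'(c_0)^{-1}$ is precisely an expression of $A'_v$ as a word in cycle products and their inverses). For the ``if'' direction the paper instead observes that orthogonality forces $\rho(A(i))=\rhod(\mathcal A)^{k}$ for every $P$-cycle and invokes Lemma~\ref{Lemm-PSC}, whereas your direct computation of $\mathbb E_{(\nu,P)}$ in the transported Euclidean norm is more self-contained and avoids that lemma. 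In short: same key trick, but the paper's group-membership lemma does double duty and makes the proof shorter, while your version spells out the two consequences separately.
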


\begin{proof}
Let $s \in \llbracket 1, N\rrbracket$ and consider the group $\widetilde C(P, s)$ generated by $C(P, s)$. We claim that $A_1, \dotsc, A_N \in \widetilde C(P, s)$. Indeed, since $P$ is strongly connected, there exists a $P$-cycle $i = (i_1, \dotsc, i_k)$ starting at $s$ such that $\{i_1, \dotsc, i_k\} = \llbracket 1, N\rrbracket$. Since $p_{i_k i_k} > 0$, then $A_{i_k}^2 A_{i_{k-1}} \dotsm A_{i_1} \in C(P, s)$ and
\[
A_{i_k} = \left(A_{i_k}^2 A_{i_{k-1}} \dotsm A_{i_1}\right) \left(A_{i_k} \dotsm A_{i_1}\right)^{-1} \in \widetilde C(P, s).
\]
Similarly, since $p_{i_{k-1} i_{k-1}} > 0$, then $A_{i_k} A_{i_{k-1}}^2 A_{i_{k-2}} \dotsm A_{i_1} \in C(P, s)$ and
\[
A_{i_{k-1}} = A_{i_k}^{-1} \left(A_{i_k} A_{i_{k-1}}^2 A_{i_{k-2}} \dotsm A_{i_1}\right) \left(A_{i_k} A_{i_{k-1}} \dotsm A_{i_1}\right)^{-1} A_{i_k} \in \widetilde C(P, s).
\]
An inductive reasoning based on the identity
\begin{equation}
\label{InductionPositiveDiagonal}
A_{i_j} = \left(A_{i_k} \dotsm A_{i_{j+1}}\right)^{-1} \left(A_{i_k} \dotsm A_{i_{j+1}} A_{i_j}^2 A_{i_{j-1}} \dotsm A_{i_1}\right) \left(A_{i_k} \dotsm A_{i_1}\right)^{-1} \left(A_{i_k} \dotsm A_{i_{j+1}}\right)
\end{equation}
yields that $A_{i_j} \in \widetilde C(P, s)$ for $j \in \llbracket 1, k\rrbracket$, as required.

To prove that $C(P, s)$ is irreducible for every $s$, assume by contradiction that there exists $s \in \llbracket 1, N\rrbracket$ such that $C(P, s)$ is reducible. Then the group $\widetilde C(P, s)$ is also reducible, however, since it contains $A_1, \dotsc, A_N$, this contradicts the irreducibility of $\mathcal A$.

Since 
$A_1, \dotsc, A_N$ are 
%$\mathcal A$ is made of 
invertible matrices, $\rhod(\mathcal A)$ is positive and, with no loss of generality, we can assume that $\rhod(\mathcal A) = 1$. If $\rhod(\mathcal A) = \rhop(P, \mathcal A)$, then, applying Proposition \ref{PropOrthogonal1} to $C(P, 1)$, there exists a basis in which every $M \in C(P, 1)$ is orthogonal. Hence, in this same basis, $\widetilde C(P, 1)$ is also made of orthogonal matrices, yielding the conclusion. On the other hand, if there exists a basis in which $A_1, \dotsc, A_N$ are orthogonal, then $\rho(A(i)) = 1$ for every $P$-word $i$, and the conclusion follows by Lemma~\ref{Lemm-PSC}.
\end{proof}

\begin{remk}
\label{RemkPositiveDiagonal}
Notice that, to obtain the second part of the conclusion of Proposition \ref{PropPPositiveDiagonal}, it is enough that there exists $s \in \llbracket 1, N\rrbracket$ such that $C(P, s)$ is irreducible and the generated group $\widetilde C(P, s)$ contains all matrices $A_1, \dotsc, A_N$. The assumption that $P$ has positive diagonal entries is used to guarantee the latter, and therefore it can be replaced by any other condition ensuring that $A_1, \dotsc, A_N$ belong to $\widetilde C(P, s)$ for some $s \in \llbracket 1, N\rrbracket$. For instance, assume that $p_{11} = 0$ and $p_{jj} > 0$ for $j \in \llbracket 2, N\rrbracket$. For every $P$-cycle $(i_1, \dotsc, i_k)$ with $i_1 = 1$ and $i_j \neq 1$ for every $j \in \llbracket 2, k\rrbracket$, we can proceed as in the proof of the proposition to obtain that $A_{i_j} \in \widetilde C(P, 1)$ for every $j \in \llbracket 2, k\rrbracket$ and use the identity
\[
A_{i_1} = \left(A_{i_k} \dotsm A_{i_2}\right)^{-1} \left(A_{i_k} \dotsm A_{i_1}\right)
\]
to obtain that $A_{i_1} \in \widetilde C(P, 1)$. Since $P$ is strongly connected, every matrix $A_i$, $i \in \llbracket 1, N\rrbracket$, belongs to such a $P$-cycle, hence the conclusion.
\end{remk}

% \color{blue}
% MS: I find that it would be more coherent to include the counterexample of the remark below in the discussion of the remark above. It is true that it would become a too long remark: is it possible to present it in a more organized way? Two related claims: (i) some diagonal elements may be zero, but (ii) the fact that one only is nonzero is not enough in general. Maybe is not necessary to pass through primitivity, at least in the statement

%\begin{remk}
At the light of Remark~\ref{RemkPositiveDiagonal}, we may wonder whether the second part of the conclusion of Proposition~\ref{PropPPositiveDiagonal} can be obtained under even weaker assumptions on the matrix $P$, allowing for instance the presence of more than one diagonal element equal to zero, but requiring at least one non-zero element in the diagonal. The example given below shows that this is not possible.
%such as the assumption that $P$ is primitive (i.e., that there exists $k \in \mathbb N$ such that all entries of $P^k$ are positive) or that $P$ has at least one diagonal entry which is non-zero. These assumptions, however, are not sufficient to obtain the second part of the conclusion of Proposition~\ref{PropPPositiveDiagonal}. To see that, 

\begin{expl}
Consider the case $d = 2$, $N = 3$,
\[
A_1 = \id, \quad A_2 = \begin{pmatrix}
0 & 1 \\
-1 & 0 \\
\end{pmatrix}, \quad A_3 = \begin{pmatrix}
0 & -\frac{1}{2} \\
1 & 0 \\
\end{pmatrix}, \quad P = \begin{pmatrix}
\frac{1}{2} & \frac{1}{2} & 0 \\
0 & 0 & 1 \\
1 & 0 & 0 \\
\end{pmatrix}.
\]
Note that, in this case,
\[
A_3 A_2 = \begin{pmatrix}
\frac{1}{2} & 0 \\
0 & 1 \\
\end{pmatrix}, \quad A_2 A_3 = \begin{pmatrix}
1 & 0 \\
0 & \frac{1}{2} \\
\end{pmatrix}.
\]
The matrix $P$ is stochastic, strongly connected, and its unique invariant probability is $\nu = (\frac{1}{2}, \frac{1}{4}, \frac{1}{4})$. Moreover, $\mathcal A = (A_1, A_2, A_3)$ is irreducible and $A_1, A_2, A_3$ are invertible. Denoting by $\norm{\cdot}$ the Euclidean norm in $\mathbb R^2$, we have $\norm{A_1} = \norm{A_2} = \norm{A_3} = 1$, yielding that $\rhod(\mathcal A) \leq 1$, and we easily check that $\rhod(\mathcal A) = 1$ by considering $\sigma \in \mathfrak S$ given by $\sigma(i) = 1$ for every $i \in \mathbb N$. Moreover, for any $(\nu, P)$-word $(i_1, \dotsc, i_k)$, there exist an integer $\ell \geq 0$ and $a, b \in \{0, 1\}$ such that $\norm{A_{i_k} \dotsm A_{i_1}} = \norm{A_2^b (A_3 A_2)^\ell A_3^a}$. Setting $x = \begin{pmatrix}1\\0\end{pmatrix}$ in the case $a = 1$ and $x = \begin{pmatrix}0\\1\end{pmatrix}$ in the case $a = 0$, it is immediate to verify that $\norm{A_2^b (A_3 A_2)^\ell A_3^a x} = 1$, yielding that $\norm{A_{i_k} \dotsm A_{i_1}} = 1$ for every $(\nu, P)$-word $(i_1, \dotsc, i_k)$, and thus $\rhop(\nu, P, \mathcal A) = \rhop(P, A) = 1$. However, $A_3$ is not similar to an orthogonal matrix, and hence the second conclusion of Proposition~\ref{PropPPositiveDiagonal} does not hold. Notice moreover that, in this case, $C(P, 1) = \{(A_3 A_2)^n \mid n \in \mathbb N \cup \{0\}\}$, $C(P, 2) = \{(A_3 A_2)^n \mid n \in \mathbb N\}$, and $C(P, 3) = \{(A_2 A_3)^n \mid n \in \mathbb N\}$, and thus $C(P, s)$ is reducible for every $s \in \{1, 2, 3\}$.
\end{expl}

\begin{remk}
We now provide a description of all cases where equality holds between $\rhod(\mathcal A)$ and $\rhop(\nu, P, \mathcal A)$ under the assumption that $\mathcal A$ is irreducible and made of two invertible matrices.
\begin{enumerate}
\item If $P = \begin{pmatrix}p & 1 - p \\ 1 - q & q\end{pmatrix}$ for $p, q \in [0, 1)$ with $p + q > 0$, by Remark \ref{RemkPositiveDiagonal}, equality occurs if and only if there exists an invertible matrix $G \in \mathcal M_d(\mathbb R)$ such that $\rhod(\mathcal A)^{-1} G A_1 G^{-1}$ and $\rhod(\mathcal A)^{-1} G A_2 G^{-1}$ are orthogonal.
\item If $P = \begin{pmatrix}0 & 1 \\ 1 & 0\end{pmatrix}$, equality occurs if and only if $\rho(A_1 A_2) = \rho(A_2 A_1) = \rhod(\mathcal A)^2$.
\item If $P = \id_2$, equality occurs if and only if $\rho(A_i) = \rhod(\mathcal A)$ whenever $\nu_i > 0$, $i \in \{1, 2\}$.
\item If $P = \begin{pmatrix}1 & 0 \\ 1 - p & p\end{pmatrix}$ for some $p \in [0, 1)$, then equality is equivalent to $\rho(A_1) = \rhod(\mathcal A)$.
\item If $P = \begin{pmatrix}p & 1 - p\\ 0 & 1\end{pmatrix}$ for some $p \in [0, 1)$, then equality is equivalent to $\rho(A_2) = \rhod(\mathcal A)$.
\end{enumerate}
\end{remk}

%\begin{lemm}
%Let $\mathcal A = (A_1, \dotsc, A_N) \in \mathcal M_d(\mathbb R)^N$ be irreducible and $c > 0$. The following assertions are equivalent.
%\begin{enumerate}
%\item\label{CaseA} For every sequence $(i_j)_{j \in \mathbb N}$ in $\llbracket 1, N\rrbracket$, one has
%\[
%\norm*{A_{i_k} \dotsm A_{i_1}}_{\mathrm B}^{1/k} = c.
%\]
%\item\label{CaseB} For every sequence $(i_j)_{j \in \mathbb N}$ in $\llbracket 1, N\rrbracket$, one has
%\[
%\rho(A_{i_k} \dotsm A_{i_1})^{1/k} = c.
%\]
%\end{enumerate}
%\end{lemm}
%
%\begin{proof}[Sketch of the proof]
%\begin{itemize}
%\item $\Rightarrow$: immediate by Gelfand's formula.
%
%\item $\Leftarrow$: with no loss of generality, we assume $c = 1$ and we argue by contradiction. By \ref{CaseB}, taking the $\sup$ over all words of length $k$ and letting $k \to \infty$, one obtains by the Joint Spectral Radius Theorem that the joint spectral radius of $\mathcal A$ is $1$. If \ref{CaseA} does not hold, then there exist $i_1, \dotsc, i_k$ such that $\norm*{A_{i_k} \dotsm A_{i_1}}_{\mathrm B}^{1/k} < 1$ for some $(i_j)_{j \in \mathbb N}$. Then $\rho(A_{i_k} \dotsm A_{i_1}) \leq \norm*{A_{i_k} \dotsm A_{i_1}}_{\mathrm B}^{1/k} < 1$, contradicting \ref{CaseB}.
%\end{itemize}
%\end{proof}

\subsection{Equality between \texorpdfstring{$\rhod(\mathcal A)$}{rho d(A)} and \texorpdfstring{$\rhop(\mathcal A)$}{rho p(A)}}
\label{SecEquality}

Based on the results obtained previously, we can now address the issue of characterizing the equality between $\rhod(\mathcal A)$ and $\rhop(\mathcal A)$. Recall that the latter is defined as the maximum of $\rhop(\nu, P, \mathcal A)$ over all pairs $(\nu, P)$.

\begin{theo}
\label{MainTheo}
Let $\mathcal A = (A_1, \dotsc, A_N) \in \mathcal M_d(\mathbb R)^N$. Then the following statements are equivalent:
\begin{enumerate}
\item\label{MainTheo-Equality} $\rhod(\mathcal A) = \rhop(\mathcal A)$.
\item\label{MainTheo-Spr} There exist $i_1, \dotsc, i_k \in \llbracket 1, N\rrbracket$ pairwise distinct such that
\begin{equation}
\label{NSCForEquality}
\rhod(\mathcal A) = \rho(A_{i_k} \dotsm A_{i_1})^{1/k}.
\end{equation}
\end{enumerate}
\end{theo}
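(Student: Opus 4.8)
The plan is to derive both implications from Theorem~\ref{MainTheoFixedP}, which already reduces equality of $\rhod(\mathcal A)$ with a single $\rhop(\nu,P,\mathcal A)$ to the spectral‑radius condition on $(\nu,P)$-cycles.

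For \ref{MainTheo-Equality}$\Rightarrow$\ref{MainTheo-Spr}: by Remark~\ref{RemkSupAttained} the supremum defining $\rhop(\mathcal A)$ is attained, say at a pair $(\nu^\ast,P^\ast)$, so $\rhod(\mathcal A)=\rhop(\nu^\ast,P^\ast,\mathcal A)$; Theorem~\ref{MainTheoFixedP} then yields $\rho(A_{j_n}\dotsm A_{j_1})^{1/n}=\rhod(\mathcal A)$ for \emph{every} $(\nu^\ast,P^\ast)$-cycle $(j_1,\dotsc,j_n)$. It therefore suffices to exhibit one $(\nu^\ast,P^\ast)$-cycle with pairwise distinct indices. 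I would decompose $P^\ast$ and $\nu^\ast$ according to Proposition~\ref{PropStochasticDecomposition}, writing $\nu^\ast=\sum_{i=1}^R\alpha_i\nu^{[i]}$, and pick $j$ with $\alpha_j>0$; then $\nu^\ast$ is positive on the block $\mathcal I_j$ and, by Remark~\ref{RemkStronglyConnected}, the oriented graph $G_{P_j}$ is a strongly connected digraph on the (finitely many) vertices of $\mathcal I_j$, hence contains a \emph{simple} directed cycle $(i_1,\dotsc,i_k)$, i.e.\ one with pairwise distinct vertices (a shortest directed cycle is automatically simple; in the extreme case $\#\mathcal I_j=1$ the stochastic $1\times1$ block forces a single loop, for which the condition is vacuous). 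Since $i_1\in\mathcal I_j$ we have $\nu^\ast_{i_1}=\alpha_j\nu^{[j]}_{i_1}>0$, so $(i_1,\dotsc,i_k)$ is a $(\nu^\ast,P^\ast)$-cycle and \eqref{NSCForEquality} holds.

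For \ref{MainTheo-Spr}$\Rightarrow$\ref{MainTheo-Equality}: given pairwise distinct $i_1,\dotsc,i_k$ with $\rho(A_{i_k}\dotsm A_{i_1})^{1/k}=\rhod(\mathcal A)$, I would build the Markov chain concentrated on the periodic orbit they define. Let $P\in\mathcal M_N(\mathbb R)$ be the stochastic matrix that acts on $\{i_1,\dotsc,i_k\}$ as the cyclic permutation $i_1\mapsto i_2\mapsto\dotsb\mapsto i_k\mapsto i_1$ and satisfies $p_{\ell\ell}=1$ for every $\ell\notin\{i_1,\dotsc,i_k\}$, and let $\nu$ be the probability vector equal to $1/k$ at each $i_j$ and $0$ elsewhere; one checks directly that $\nu=\nu P$. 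A $(\nu,P)$-cycle must start at some $i_a$ and, because of the deterministic transitions, can only consist of $m\ge1$ consecutive turns around the $k$-cycle, so it has length $mk$ and its associated matrix is the $m$-th power of a cyclic rearrangement of $A_{i_k}\dotsm A_{i_1}$. By the invariance of the spectral radius under cyclic rearrangement and the identity $\rho(M^m)=\rho(M)^m$, the $1/(mk)$-th root of its spectral radius equals $\rho(A_{i_k}\dotsm A_{i_1})^{1/k}=\rhod(\mathcal A)$. Hence $(\nu,P)$ satisfies condition~\ref{MainTheoFixedP-Spr} of Theorem~\ref{MainTheoFixedP}, which gives $\rhop(\nu,P,\mathcal A)=\rhod(\mathcal A)$; combined with \eqref{Inequality} this yields $\rhop(\mathcal A)=\rhod(\mathcal A)$.

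The main obstacle is the graph‑theoretic step in \ref{MainTheo-Equality}$\Rightarrow$\ref{MainTheo-Spr}: Theorem~\ref{MainTheoFixedP} only controls $(\nu^\ast,P^\ast)$-cycles in general, so the crux is to guarantee that among them there is one with \emph{no repeated index}. The decomposition of Proposition~\ref{PropStochasticDecomposition} isolates a closed strongly connected class on which $\nu^\ast$ is positive, and a shortest directed cycle inside it supplies the required simple $(\nu^\ast,P^\ast)$-cycle. The degenerate case $\rhod(\mathcal A)=0$ (forced anyway by \eqref{Inequality} when $\rhop(\mathcal A)=0$) needs no separate treatment: both arguments go through verbatim, the relevant product simply being nilpotent.
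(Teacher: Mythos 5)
Your proof is correct and follows essentially the same route as the paper: both directions are reduced to Theorem~\ref{MainTheoFixedP}, with the maximizing pair $(\nu^\ast,P^\ast)$ obtained from Remark~\ref{RemkSupAttained} and a simple $(\nu^\ast,P^\ast)$-cycle extracted for one implication, and the cyclic-permutation Markov chain supported on $\{i_1,\dotsc,i_k\}$ (using $\rho(M_1M_2)=\rho(M_2M_1)$) for the other. The only difference is that you spell out, via Proposition~\ref{PropStochasticDecomposition} and a shortest-cycle argument, the existence of a simple $(\nu^\ast,P^\ast)$-cycle, a step the paper dismisses as clear.
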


\begin{proof}
We start by proving that \ref{MainTheo-Equality} implies \ref{MainTheo-Spr}. Recall that, by Remark \ref{RemkSupAttained}, there exist a stochastic matrix $P$ and an invariant probability $\nu$ for $P$ such that $\rhop(\nu, P, \mathcal A) = \rhop(\mathcal A)$. Using \ref{MainTheo-Equality}, we deduce that $\rhop(\nu, P, \mathcal A) = \rhod(\mathcal A)$. It is clear that there exists a $(\nu, P)$-cycle $(i_1, \dotsc, i_k)$ such that $i_1, \dotsc, i_k$ are pairwise distinct, and the conclusion follows from Theorem \ref{MainTheoFixedP}.

To prove that \ref{MainTheo-Spr} implies \ref{MainTheo-Equality}, let $P = (p_{ij})$ be a stochastic matrix with $p_{i_{j-1} i_j} = 1$ for $j \in \llbracket 2, k \rrbracket$ and $p_{i_k i_1} = 1$. Set $\nu \in \mathbb R^N$ as the probability vector such that $\nu_{i_j} = \frac{1}{k}$ for $j \in \llbracket 1, k\rrbracket$. Then $\nu$ is invariant under $P$ and the set of $(\nu, P)$-cycles is made of the shifts of $(i_1, \dotsc, i_k)$ and their powers. Moreover, for every such $(\nu, P)$-cycle $(j_1, \dotsc, j_s)$, we have
\[
\rho(A_{j_s} \dotsm A_{j_1})^{1/s} = \rho(A_{i_k} \dotsm A_{i_1})^{1/k} = \rhod(\mathcal A).
\]
Indeed, this follows from the fact that $\rho(M_1 M_2) = \rho(M_2 M_1)$ for every $M_1, M_2 \in \mathcal M_d(\mathbb R)$. Then Theorem \ref{MainTheoFixedP}\ref{MainTheoFixedP-Spr} holds, hence $\rhop(\nu, P, \mathcal A) = \rhod(\mathcal A)$, and the conclusion follows from \eqref{Inequality}.
\end{proof}

\begin{remk}
It follows from \eqref{Inequality} that, if $\rhod(\mathcal A) > 0$, the ratio $\frac{\rhop(\mathcal A)}{\rhod(\mathcal A)}$ belongs to $[0, 1]$ and Theorem \ref{MainTheo} addresses the case where it is equal to $1$. We provide next an example where it is equal to $0$, proving that it is not possible to find a uniform positive lower bound for this ratio. Indeed, considering
\[
A_1 = 
\begin{pmatrix}
0 & 1 & 0 \\
0 & 0 & 1 \\
0 & 0 & 0 \\
\end{pmatrix}, \qquad A_2 = \begin{pmatrix}
0 & 0 & 0 \\
0 & 0 & 0 \\
1 & 0 & 0 \\
\end{pmatrix},
\]
an immediate computation yields
\[
A_1^2 A_2 = \begin{pmatrix}1 & 0 & 0 \\ 0 & 0 & 0 \\ 0 & 0 & 0 \\\end{pmatrix},\;
A_1 A_2 A_1 = \begin{pmatrix}0 & 0 & 0 \\ 0 & 1 & 0 \\ 0 & 0 & 0 \\\end{pmatrix},\;
A_2 A_1^2 = \begin{pmatrix}0 & 0 & 0 \\ 0 & 0 & 0 \\ 0 & 0 & 1 \\\end{pmatrix},
\]
and $A_1^3 = A_1 A_2^2 = A_2 A_1 A_2 = A_2^2 A_1 = A_2^3 = 0$. Let $\norm{\cdot}_1$ denote the matrix norm induced by the $\ell^1$ norm in $\mathbb R^3$. Define
\[
\mathcal E = \{(2, 1, 1, 2, 1, 1, \dotsc), (1, 2, 1, 1, 2, 1, \dotsc), (1, 1, 2, 1, 1, 2, \dotsc)\}
\]
and, for $k \in \mathbb N$, let $\mathcal E_k$ be the set made of the three words of length $k$ obtained by taking the first $k$ entries of each element of $\mathcal E$. By an easy computation, we get that, for every $k \in \mathbb N$ and $(i_1, \dotsc, i_k) \in \llbracket 1, N\rrbracket^k$,
\[
\norm{A_{i_k} \dotsm A_{i_1}}_{1} = 
\begin{dcases*}
1, & if $(i_1, \dotsc, i_k)\in \mathcal E_k$, \\
0, & otherwise.
\end{dcases*}
\]
We then obtain that $\rhod(\mathcal A) = 1$. On the other hand, for every stochastic matrix $P \in \mathcal M_2(\mathbb R)$ and every invariant probability vector $\nu$ for $P$, we have $\mathbb P_{(\nu, P)}(\mathcal E) = 0$. Hence
\[
\lim_{n \to \infty} \norm{A_{i_{n}} \dotsm A_{i_1}}_{1}^{1/n} = 0 \quad \mathbb P_{(\nu, P)}\text{-a.s.},
\]
proving that $\rhop(\nu, P, \mathcal A) = 0$. Then $\rhop(\mathcal A) = 0$.
\end{remk}

\section{Markov chains of higher order}

In this section, we extend the previous results to probability measures on $\mathfrak S$ obtained from discrete-time shift-invariant Markov chains of order $m \geq 1$. Any such probability measure $\mu$ can be described by a pair $(\nu, P)$ 
%, where $\nu$ and $P$ are 
of
tensors of orders $m$ and $m+1$, respectively, where the non-negative scalar
%number
$P_{i_1 \dotso i_m i_{m+1}}$ represents the probability to switch from the state $i_m$ to the state $i_{m+1}$ when the previous $m$ states of the chain are $(i_1, \dotsc, i_m)$, and $\nu_{i_1 \dotso i_m}$ represents the probability of the first $m$ states being $(i_1, \dotsc, i_m)$. In particular, for every $(i_1, \dotsc, i_m) \in \llbracket 1, N\rrbracket^m$, we have that
\[
\sum_{i_{m+1} = 1}^N P_{i_1 \dotso i_m i_{m+1}} = 1
\]
and $\nu$ satisfies
\[
\sum_{(i_1, \dotsc, i_m) \in \llbracket 1, N\rrbracket^m} \nu_{i_1 \dotso i_m} = 1.
\]
We refer to such $\nu$ and $P$ as a \emph{probability tensor of order $m$} and a \emph{stochastic tensor of order $m+1$}, respectively. The shift-invariance property now reads
\[
\sum_{i_1=1}^N \nu_{i_1 \dotso i_m} P_{i_1 \dotso i_{m+1}} = \nu_{i_2 \dotso i_{m+1}}, \qquad \text{ for every } (i_2, \dotsc, i_{m+1}) \in \llbracket 1, N\rrbracket^m,
\]
and any probability tensor $\nu$ satisfying the above shift-invariant property is said to be \emph{invariant} under $P$. The probabilistic joint spectral radius $\rhop(\nu, P, \mathcal A)$ associated with $(\nu, P)$ is still defined by \eqref{EqDefiLambdaP}, where the expectation $\mathbb E_{(\nu, P)}$ corresponds to the probability measure on $\mathfrak S$ defined above.

Markov chains of order $m \geq 1$ can be canonically transformed into Markov chains of order $1$ by considering as state space the set $\llbracket 1, N\rrbracket^m$ and defining a pair $(\widehat \nu, \widehat P)$ from $(\nu, P)$ by $\widehat\nu_{(i_1, \dotsc, i_m)} = \nu_{i_1 \dotso i_m}$ and
\[\widehat P_{(i_1, \dotsc, i_m), (j_1, \dotsc, j_{m})} = 
\begin{dcases*}
P_{i_1 \dotso i_{m} j_m} & if $(i_2, \dotsc, i_m) = (j_1, \dotsc, j_{m-1})$, \\
0 & otherwise,
\end{dcases*}
\]
for every $(i_1, \dotsc, i_m)$ and $(j_1, \dotsc, j_m)$ in $\llbracket 1, N\rrbracket^{m}$. It is immediate from the definitions and the shift-invariance property that
\[\rhop(\nu, P, \mathcal A) = \rhop(\widehat\nu, \widehat P, \widehat{\mathcal A}),\]
where $\widehat{\mathcal A} = (\widehat A_{i_1 \dotso i_m})_{(i_1, \dotsc, i_m) \in \llbracket 1, N\rrbracket^m}$ and $\widehat A_{i_1 \dotso i_m} = A_{i_m}$ for every $(i_1, \dotsc, i_m) \in \llbracket 1, N\rrbracket^m$.

For every positive integer $k$, we say that $(i_1, \dotsc, i_k)$ is a \emph{$(\nu, P)$-cycle} if
\[
\bigl((i_{-m+2}, \dotsc, i_0, i_1), \dotsc, (i_{k-m+1}, \dotsc, i_k)\bigr)
\]
is a $(\widehat\nu, \widehat P)$-cycle, where $z \mapsto i_z$ is extended to $\mathbb Z$ by $k$-periodicity.

Applying Theorem~\ref{MainTheoFixedP} to $(\widehat\nu, \widehat P)$ and $\widehat{\mathcal A}$, we deduce at once the following.
\begin{theo}
\label{TheoEquivalenceOrderM}
Let $m$ be a positive integer, $P$ be a stochastic tensor of order $m+1$, $\nu$ be an invariant probability tensor for $P$, and $\mathcal A = (A_1,\allowbreak \dotsc,\allowbreak A_N) \in \mathcal M_d(\mathbb R)^N$. Then the following statements are equivalent:
\begin{enumerate}
\item $\rhod(\mathcal A) = \rhop(\nu, P, \mathcal A)$.
\item\label{TheoEquivalenceOrderM-B} $\rho(A_{i_k} \dotsm A_{i_1})^{1/k} = \rhod (\mathcal A)$ for every $(\nu, P)$-cycle $(i_1, \dotsc, i_k)$.
\end{enumerate}
\end{theo}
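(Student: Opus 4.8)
The plan is to deduce the statement from Theorem~\ref{MainTheoFixedP} applied to the order-one pair $(\widehat\nu,\widehat P)$ and the tuple $\widehat{\mathcal A}$ constructed in the paragraphs preceding the statement, so that essentially nothing beyond bookkeeping remains. First I would record the two elementary identities that make the reduction work. On the deterministic side, since $\widehat A_{i_1\dotso i_m}=A_{i_m}$, every finite product of matrices from $\widehat{\mathcal A}$ is a product of matrices from $\mathcal A$, and conversely any product $A_{i_n}\dotsm A_{i_1}$ is realized by a product of $\widehat A$'s (pad each index $i_j$ to any multi-index ending in $i_j$); hence $\rhod(\widehat{\mathcal A})=\rhod(\mathcal A)$. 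On the probabilistic side, one checks that $\widehat P$ is stochastic and that $\widehat\nu$ is an invariant probability for it — the latter being precisely the shift-invariance relation for $(\nu,P)$ rewritten with $\widehat\nu_{(i_1,\dotsc,i_m)}=\nu_{i_1\dotso i_m}$ — and then $\rhop(\nu,P,\mathcal A)=\rhop(\widehat\nu,\widehat P,\widehat{\mathcal A})$, as already observed in the text.

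Next I would invoke Theorem~\ref{MainTheoFixedP} verbatim for $(\widehat\nu,\widehat P,\widehat{\mathcal A})$: the equality $\rhod(\widehat{\mathcal A})=\rhop(\widehat\nu,\widehat P,\widehat{\mathcal A})$ holds if and only if $\rho(\widehat A(s_1,\dotsc,s_\ell))^{1/\ell}=\rhod(\widehat{\mathcal A})$ for every $(\widehat\nu,\widehat P)$-cycle $(s_1,\dotsc,s_\ell)$. Combining this with the two identities above, statement~(a) is equivalent to the same cycle condition with $\rhod(\widehat{\mathcal A})$ replaced by $\rhod(\mathcal A)$.

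The remaining point is purely a translation between the two notions of cycle. Because of the band structure of $\widehat P$ — the entry $\widehat P_{(i_1,\dotsc,i_m),(j_1,\dotsc,j_m)}$ can be nonzero only when $(i_2,\dotsc,i_m)=(j_1,\dotsc,j_{m-1})$ — consecutive states of a $\widehat P$-cycle overlap in $m-1$ coordinates, so a $(\widehat\nu,\widehat P)$-cycle $(s_1,\dotsc,s_\ell)$ is entirely determined by the last coordinates $i_j$ of the $s_j$: one has $s_j=(i_{j-m+1},\dotsc,i_j)$ with $z\mapsto i_z$ read $\ell$-periodically, and the closing edge $s_\ell\to s_1$ is exactly the compatibility condition imposed by this periodic reading. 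This yields a length-preserving bijection between $(\widehat\nu,\widehat P)$-cycles and the $(\nu,P)$-cycles of the order-$m$ chain defined just before the statement. Moreover $\widehat A(s_1,\dotsc,s_\ell)=\widehat A_{s_\ell}\dotsm\widehat A_{s_1}=A_{i_\ell}\dotsm A_{i_1}=A(i_1,\dotsc,i_\ell)$, so the cycle condition becomes exactly~\ref{TheoEquivalenceOrderM-B}.

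I do not expect a genuine obstacle: the content of the theorem lies entirely in Theorem~\ref{MainTheoFixedP}, and the work is the bookkeeping of the lifting — verifying the two $\rho$-identities and making the cycle correspondence precise, including the edge case $\ell<m$, which is handled cleanly by the $\ell$-periodic extension. If anything, the one place to be careful is to make sure the bijection is onto, i.e.\ that every $(\widehat\nu,\widehat P)$-cycle arises from an order-$m$ $(\nu,P)$-cycle; this is exactly where the overlap structure of $\widehat P$ and the $\ell$-periodicity are used.
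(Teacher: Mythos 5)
Your proposal is correct and is exactly the paper's argument: the paper proves this theorem by the one-line remark that it follows from applying Theorem~\ref{MainTheoFixedP} to $(\widehat\nu,\widehat P)$ and $\widehat{\mathcal A}$, and your write-up simply makes explicit the bookkeeping (the identities $\rhod(\widehat{\mathcal A})=\rhod(\mathcal A)$ and $\rhop(\nu,P,\mathcal A)=\rhop(\widehat\nu,\widehat P,\widehat{\mathcal A})$, and the cycle correspondence) that the paper leaves implicit — note that the cycle correspondence is essentially built into the paper's definition of a $(\nu,P)$-cycle for the order-$m$ chain, so only the surjectivity observation you flag carries any content.
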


Recall that \eqref{SwitchedSystem} is said to be \emph{periodically stable} if $\rho(\sigma) < 1$ for all periodic signals $\sigma \in \mathfrak S$. It has been shown in \cite{Dai2011Periodically} that this property implies $\rhop(\nu, P, \mathcal A) < 1$ for every strongly connected stochastic matrix $P \in \mathcal M_N(\mathbb R)$, where $\nu \in \mathbb R^N$ is the unique invariant probability vector for $P$. A slightly improved version of this result can be obtained as a consequence of Theorem~\ref{TheoEquivalenceOrderM} as stated in the following corollary.

\begin{coro}
Assume that \eqref{SwitchedSystem} is periodically stable. Then, for every $m \in \mathbb N$, every stochastic tensor $P$ of order $m+1$, and every invariant probability tensor $\nu$ for $P$,
we have $\rhop(\nu, P, \mathcal A) < 1$.
%, for every $(m, \nu, P)$, 
%where $m \in \mathbb N$, $P$ is a stochastic tensor of order $m+1$, and $\nu$ is an invariant probability tensor for $P$
\end{coro}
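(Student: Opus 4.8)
The plan is to reduce to the order-one setting handled by Theorem~\ref{TheoEquivalenceOrderM} and then to contradict periodic stability. I fix $m \in \mathbb N$, a stochastic tensor $P$ of order $m+1$, and an invariant probability tensor $\nu$ for $P$, pass to the lifted pair $(\widehat\nu, \widehat P)$ on the state space $\llbracket 1, N\rrbracket^m$, and recall the identity $\rhop(\nu, P, \mathcal A) = \rhop(\widehat\nu, \widehat P, \widehat{\mathcal A})$ established above. I then argue by contradiction, assuming $\rhop(\nu, P, \mathcal A) \geq 1$.

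The first step is to promote this assumption into the equality $\rhod(\mathcal A) = \rhop(\nu, P, \mathcal A) = 1$. Since the entries of $\widehat{\mathcal A}$ are exactly the matrices $A_1, \dotsc, A_N$ (possibly with repetitions, which do not affect the joint spectral radius), one has $\rhod(\widehat{\mathcal A}) = \rhod(\mathcal A)$, and then \eqref{Inequality} applied to $(\widehat\nu, \widehat P, \widehat{\mathcal A})$ gives $\rhod(\mathcal A) \geq \rhop(\nu, P, \mathcal A) \geq 1$. For the reverse bound I invoke the Berger--Wang formula \cite{Jungers2009Joint}, which expresses $\rhod(\mathcal A)$ as the limit superior over $n$ of $\max_{(i_1, \dotsc, i_n) \in \llbracket 1, N\rrbracket^n} \rho(A_{i_n} \dotsm A_{i_1})^{1/n}$: for each word, $\rho(A_{i_n} \dotsm A_{i_1})^{1/n}$ equals $\rho(\sigma)$ for the $n$-periodic signal $\sigma$ with period $(i_1, \dotsc, i_n)$, hence is $< 1$ by periodic stability, so the limit superior is $\leq 1$. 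Therefore $\rhod(\mathcal A) = 1 = \rhop(\nu, P, \mathcal A)$.

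The second step exhibits a periodic signal forbidden by periodic stability. Because $\rhod(\mathcal A) = \rhop(\nu, P, \mathcal A)$, Theorem~\ref{TheoEquivalenceOrderM}\ref{TheoEquivalenceOrderM-B} forces $\rho(A_{i_k} \dotsm A_{i_1})^{1/k} = 1$ for \emph{every} $(\nu, P)$-cycle $(i_1, \dotsc, i_k)$, so it suffices to produce one such cycle. Decomposing the stochastic matrix $\widehat P$ via Proposition~\ref{PropStochasticDecomposition}, the invariant probability $\widehat\nu$ is a convex combination of the invariant probabilities of the strongly connected diagonal blocks, at least one of which enters with a positive coefficient; any cycle lying inside that block is a $(\widehat\nu, \widehat P)$-cycle, and by definition it corresponds to a $(\nu, P)$-cycle $(i_1, \dotsc, i_k)$. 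Unfolding the latter into the $k$-periodic signal $\sigma$ with period $(i_1, \dotsc, i_k)$ yields $\rho(\sigma) = \rho(A_{i_k} \dotsm A_{i_1})^{1/k} = 1$, contradicting periodic stability; hence $\rhop(\nu, P, \mathcal A) < 1$.

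The main difficulty is bookkeeping rather than substance: one must keep straight the correspondence between a cycle of the lifted chain $(\widehat\nu, \widehat P)$, a $(\nu, P)$-cycle in the sense of this section, and an honest periodic switching signal of $\Sigma(\mathcal A)$, checking that the value of $\rho(\cdot)$ is preserved throughout (which uses only invariance of $\rho$ under cyclic permutation of a matrix product). The single external input is the Berger--Wang formula, which is classical; everything else is a direct application of Theorem~\ref{TheoEquivalenceOrderM} together with the elementary fact that stationary distributions of a finite Markov chain are supported on its recurrent part.
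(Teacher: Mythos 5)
Your proof is correct and follows essentially the same route as the paper's: the Joint Spectral Radius Theorem gives $\rhod(\mathcal A) \leq 1$ from periodic stability, and Theorem~\ref{TheoEquivalenceOrderM} then rules out $\rhop(\nu, P, \mathcal A) = \rhod(\mathcal A)$ when $\rhod(\mathcal A) = 1$, the only difference being that you phrase it as a contradiction. Your extra care in producing an actual $(\nu, P)$-cycle via Proposition~\ref{PropStochasticDecomposition} is a detail the paper leaves implicit, and it is handled correctly.
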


\begin{proof}
By the Joint Spectral Radius Theorem (see, e.g., \cite[Theorem 2.3]{Jungers2009Joint}), periodic stability implies that $\rhod(\mathcal A) \leq 1$. In the case $\rhod(\mathcal A) < 1$, the conclusion follows immediately. Otherwise, when $\rhod(\mathcal A) = 1$, the periodic stability assumption implies that assertion \ref{TheoEquivalenceOrderM-B} from Theorem~\ref{TheoEquivalenceOrderM} does not hold, which proves that $\rhop(\nu, P, \mathcal A) < \rhod(\mathcal A) = 1$, yielding the conclusion.
\end{proof}

Similarly %to
as for Theorem~\ref{TheoEquivalenceOrderM}, we deduce by applying Theorem~\ref{MainTheo} to $(\widehat\nu, \widehat P)$ and $\widehat{\mathcal A}$ the following.
\begin{theo}
\label{MainTheoOrderM}
Let $m$ be a positive integer and $\mathcal A = (A_1, \dotsc, A_N) \in \mathcal M_d(\mathbb R)^N$. Then the following statements are equivalent:
\begin{enumerate}
\item\label{MainTheoOrderM-Equality} $\rhod(\mathcal A) = \rhop(m, \mathcal A)$, where $\rhop(m, \mathcal A)$ is the supremum of $\rhop(\nu, P, \mathcal A)$ over all pairs $(\nu, P)$ with $P$ a stochastic tensor of order $m+1$ and $\nu$ an invariant probability tensor for $P$.
\item\label{MainTheoOrderM-Spr} There exist $i_1, \dotsc, i_k \in \llbracket 1, N\rrbracket$ such that
\begin{equation*}
\rhod(\mathcal A) = \rho(A_{i_k} \dotsm A_{i_1})^{1/k}
\end{equation*}
and $(i_{j_1}, \dotsc, i_{j_1 + m - 1}) \neq (i_{j_2}, \dotsc, i_{j_2 + m - 1})$ whenever $j_1, j_2 \in \llbracket 1, k\rrbracket$ with $j_1 \neq j_2$, where $z \mapsto i_z$ is extended to $\mathbb Z$ by $k$-periodicity.
\end{enumerate}
\end{theo}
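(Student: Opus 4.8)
The proof proceeds exactly along the lines of the proofs of Theorem~\ref{MainTheo} and Theorem~\ref{TheoEquivalenceOrderM}, carrying the argument through the order-one lift $(\widehat\nu,\widehat P)$ of $(\nu,P)$ and the associated $N^m$-tuple $\widehat{\mathcal A}$. Two preliminary facts will be used throughout. First, since $\widehat A_{i_1\dotsm i_m}=A_{i_m}$, the products of length $n$ of matrices from $\widehat{\mathcal A}$ are precisely the products of length $n$ of matrices from $\mathcal A$, so $\rhod(\widehat{\mathcal A})=\rhod(\mathcal A)$; combined with $\rhop(\nu,P,\mathcal A)=\rhop(\widehat\nu,\widehat P,\widehat{\mathcal A})\le\rhod(\widehat{\mathcal A})$, this already yields $\rhop(m,\mathcal A)\le\rhod(\mathcal A)$. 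Second, the bijection $j\mapsto j+m-1$ of $\mathbb Z/k\mathbb Z$ shows that the ``forward windows'' $(i_j,\dotsc,i_{j+m-1})$ appearing in \ref{MainTheoOrderM-Spr} are pairwise distinct over $j\in\llbracket 1,k\rrbracket$ if and only if the ``backward windows'' $w_j:=(i_{j-m+1},\dotsc,i_j)$ are, that is, if and only if the states $w_1,\dotsc,w_k$ of the corresponding $\widehat P$-cycle are pairwise distinct.

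For the implication \ref{MainTheoOrderM-Spr}$\Rightarrow$\ref{MainTheoOrderM-Equality}, given $i_1,\dotsc,i_k$ as in \ref{MainTheoOrderM-Spr}, set $w_j=(i_{j-m+1},\dotsc,i_j)$, which are pairwise distinct. Consecutive $w_j$ overlap in the way demanded by the shift structure, so one can choose a stochastic tensor $P$ of order $m+1$ whose lift $\widehat P$ sends $w_j$ deterministically to $w_{j+1}$ cyclically (the rows of $\widehat P$ outside $\{w_1,\dotsc,w_k\}$ being filled in arbitrarily, subject only to the shift constraint that any symbol may be appended), and an invariant probability tensor $\nu$ whose lift $\widehat\nu$ is uniform on $\{w_1,\dotsc,w_k\}$. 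The only $(\widehat\nu,\widehat P)$-cycles are then the cyclic shifts of $(w_1,\dotsc,w_k)$ and their powers; for each of them, using $\widehat A_{w_j}=A_{i_j}$, the invariance of the spectral radius under cyclic permutation, and $\rho(M^r)^{1/(rk)}=\rho(M)^{1/k}$, the associated matrix product has normalized spectral radius equal to $\rho(A_{i_k}\dotsm A_{i_1})^{1/k}=\rhod(\mathcal A)=\rhod(\widehat{\mathcal A})$. Theorem~\ref{MainTheoFixedP} applied to $(\widehat\nu,\widehat P,\widehat{\mathcal A})$ (equivalently, Theorem~\ref{TheoEquivalenceOrderM} applied to $(\nu,P)$) then gives $\rhop(\nu,P,\mathcal A)=\rhop(\widehat\nu,\widehat P,\widehat{\mathcal A})=\rhod(\mathcal A)$, whence $\rhop(m,\mathcal A)=\rhod(\mathcal A)$.

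For the implication \ref{MainTheoOrderM-Equality}$\Rightarrow$\ref{MainTheoOrderM-Spr}, I would first note, as in Remark~\ref{RemkSupAttained}, that the set of pairs $(\nu,P)$ with $P$ a stochastic tensor of order $m+1$ and $\nu$ an invariant probability tensor for $P$ is compact (a closed subset of a product of simplices, invariance being a closed condition) and that $(\nu,P)\mapsto\rhop(\nu,P,\mathcal A)=\rhop(\widehat\nu,\widehat P,\widehat{\mathcal A})$ is upper semicontinuous, so the supremum defining $\rhop(m,\mathcal A)$ is attained at some pair $(\nu,P)$, for which $\rhop(\widehat\nu,\widehat P,\widehat{\mathcal A})=\rhod(\widehat{\mathcal A})$. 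By Theorem~\ref{MainTheoFixedP} applied to $(\widehat\nu,\widehat P,\widehat{\mathcal A})$, every $(\widehat\nu,\widehat P)$-cycle $(w_1,\dotsc,w_k)$ satisfies $\rho(\widehat A_{w_k}\dotsm\widehat A_{w_1})^{1/k}=\rhod(\widehat{\mathcal A})$. Since $\widehat\nu=\widehat\nu\widehat P$, the support of $\widehat\nu$ is stable under $\widehat P$-transitions and every state in it has at least one outgoing $\widehat P$-edge; following such edges from any $w$ with $\widehat\nu_w>0$ one reaches, by finiteness, a simple cycle all of whose vertices lie in the support of $\widehat\nu$, hence a $(\widehat\nu,\widehat P)$-cycle $(w_1,\dotsc,w_k)$ with pairwise distinct states. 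Reading off $w_j=(i_{j-m+1},\dotsc,i_j)$ from the shift structure recovers a $k$-periodic sequence $(i_z)_{z\in\mathbb Z}$ whose forward windows are pairwise distinct by the preliminary remark, and $\widehat A_{w_j}=A_{i_j}$ gives $\rho(A_{i_k}\dotsm A_{i_1})^{1/k}=\rhod(\widehat{\mathcal A})=\rhod(\mathcal A)$, which is exactly \ref{MainTheoOrderM-Spr}.

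The points requiring care are purely combinatorial and do not present a real obstacle: one must match the forward-window formulation of \ref{MainTheoOrderM-Spr} with pairwise distinctness of the states of a simple $\widehat P$-cycle (handled by the index bijection above), keep track of the fact that a simple cycle in the state space $\llbracket 1,N\rrbracket^m$ is precisely a periodic word of $\mathcal A$-indices with pairwise distinct $m$-windows, and check that a deterministic loop on prescribed pairwise distinct windows really is the lift of a bona fide order-$m$ stochastic tensor. Everything else reduces, via the lift, to results already established in the order-one case.
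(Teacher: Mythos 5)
Your proof is correct and follows exactly the route the paper intends: lift $(\nu,P)$ to the order-one pair $(\widehat\nu,\widehat P)$ on the state space $\llbracket 1,N\rrbracket^m$ and invoke the order-one results, with statement \ref{MainTheoOrderM-Spr} translated into the existence of a simple $\widehat P$-cycle via the window bijection. The paper disposes of this in one sentence (``apply Theorem~\ref{MainTheo} to $(\widehat\nu,\widehat P)$ and $\widehat{\mathcal A}$''), so your write-up merely supplies the combinatorial details --- the shift-compatible construction of $P$ from distinct windows, $\rhod(\widehat{\mathcal A})=\rhod(\mathcal A)$, and the attainment of the supremum over shift-compatible pairs --- all of which are handled correctly.
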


%\begin{remk}
%Let $\mathcal A = (A_1, \dotsc, A_N)$. For every $\ell \in \mathbb N$ and word $w = (i_1, \dotsc, i_\ell) \in \llbracket 1, N\rrbracket^\ell$, set $A(w) = A_{i_\ell} \dotsm A_{i_1}$ and let $\abs{w} = \ell$ be the length of $w$. Notice that, by proceeding similarly to the second part of the proof of Theorem~\ref{MainTheo}, one can construct, for every word $w$ of finite length, a Markov chain of order $\abs{w}$ with tensors $\nu_w$, $P_w$ such that $\rho(A(w))^{1/\abs{w}} = \rhop(\nu_w, P_w, \mathcal A)$. One deduces 
%that
%\[
%\rhod(\mathcal A)=\sup_{w\text{ word of finite length}}\rho(A(w))^{1/\abs{w}} \le \sup_{m \in \mathbb N} \rhop(m, \mathcal A),
%\]
%where the equality is a consequence of the Joint Spectral Radius Theorem (see, e.g., \cite{Jungers2009Joint}). Since, moreover, $\rhop(m, \mathcal A)\le \rhod(\mathcal A)$ for every $m$, it follows that 
%$\rhod(\mathcal A)=\sup_{m \in \mathbb N} \rhop(m, \mathcal A)$.
%%the following equalities:
%%\[
%%\sup_{w\text{ word of finite length}}\rho(A(w))^{1/\abs{w}} = \rhod(\mathcal A) = \sup_{m \in \mathbb N} \rhop(m, \mathcal A).
%%\]
%%Indeed, the left equality is a consequence of the Joint Spectral Radius Theorem (see, e.g., \cite{Jungers2009Joint}), while the second equality follows from the previous arguments.
%\end{remk}

As a consequence of Theorem~\ref{MainTheoOrderM}, we have the following corollary. To state it, recall that $\mathcal A$ is said to have the \emph{finiteness property} if there exist $i_1, \dotsc, i_k \in \llbracket 1, N\rrbracket$ such that $\rhod(\mathcal A) = \rho(A_{i_k} \dotsm A_{i_1})^{1/k}$.

\begin{coro}
\label{CoroFinitenessProp}
Let $\mathcal A = (A_1, \dotsc, A_N)$. Then $\mathcal A$ has the finiteness property if and only if there exists $m \in \mathbb N$ such that $\rhod(\mathcal A) = \rhop(m, \mathcal A)$.
\end{coro}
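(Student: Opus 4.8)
The plan is to obtain Corollary~\ref{CoroFinitenessProp} as a direct consequence of Theorem~\ref{MainTheoOrderM}, the only nontrivial input being a short combinatorial lemma about periodic words. The ``if'' direction is immediate: if $\rhod(\mathcal A) = \rhop(m, \mathcal A)$ for some $m \in \mathbb N$, then the implication \ref{MainTheoOrderM-Equality}$\Rightarrow$\ref{MainTheoOrderM-Spr} of Theorem~\ref{MainTheoOrderM} yields indices $i_1, \dotsc, i_k \in \llbracket 1, N\rrbracket$ with $\rhod(\mathcal A) = \rho(A_{i_k}\dotsm A_{i_1})^{1/k}$, which is exactly the finiteness property.

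For the ``only if'' direction, suppose $\mathcal A$ has the finiteness property, witnessed by $i_1, \dotsc, i_k$ with $\rhod(\mathcal A) = \rho(A_{i_k}\dotsm A_{i_1})^{1/k}$. First I would reduce to a \emph{primitive} witness: extend $z \mapsto i_z$ to $\mathbb Z$ by $k$-periodicity, let $p$ be the minimal period of the resulting bi-infinite sequence (so $p$ divides $k$), and replace the witness by $(i_1, \dotsc, i_p)$. Since $A_{i_k}\dotsm A_{i_1} = (A_{i_p}\dotsm A_{i_1})^{k/p}$ and $\rho(M^n) = \rho(M)^n$ for every square matrix $M$ and $n \in \mathbb N$, we still have $\rho(A_{i_p}\dotsm A_{i_1})^{1/p} = \rhod(\mathcal A)$.

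Then I would take $m = p$ and verify hypothesis \ref{MainTheoOrderM-Spr} of Theorem~\ref{MainTheoOrderM} for the word $(i_1, \dotsc, i_p)$. The windows of length $m = p$, namely $(i_j, \dotsc, i_{j+p-1})$ for $j \in \llbracket 1, p\rrbracket$, are precisely the $p$ cyclic rotations of $(i_1, \dotsc, i_p)$; they are pairwise distinct, for if the rotation by $s$ coincided with the identity rotation for some $s \in \llbracket 1, p-1\rrbracket$, then the bi-infinite sequence would be $s$-periodic, hence $\gcd(s,p)$-periodic, contradicting the minimality of $p$. Therefore Theorem~\ref{MainTheoOrderM} applied with $m = p$ gives $\rhod(\mathcal A) = \rhop(p, \mathcal A)$, which is the desired conclusion.

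The only step requiring any care is the combinatorial observation in the last paragraph — that the rotations of a primitive cyclic word are pairwise distinct (equivalently, an aperiodic necklace has trivial cyclic stabilizer); it is elementary but should be written out. Everything else is bookkeeping on top of Theorem~\ref{MainTheoOrderM}.
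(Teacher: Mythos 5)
Your proposal is correct and follows essentially the same route as the paper: both directions rest on Theorem~\ref{MainTheoOrderM}, the witness is reduced to one of minimal period, and $m$ is taken equal to that period, with the pairwise distinctness of the length-$m$ windows following from minimality. The only difference is cosmetic: you spell out the reduction to a primitive witness via $\rho(M^n)=\rho(M)^n$, which the paper compresses into a ``without loss of generality.''
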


\begin{proof}
If there exists $m$ such that $\rhod(\mathcal A) = \rhop(m, \mathcal A)$, then the finiteness property of $\mathcal A$ follows immediately from Theorem~\ref{MainTheoOrderM}. Assume now that $\mathcal A$ has the finiteness property and let $i_1, \dotsc, i_k \in \llbracket 1, N\rrbracket$ be such that $\rhod(\mathcal A) = \rho(A_{i_k} \dotsm A_{i_1})^{1/k}$. Extend $z \mapsto i_z$ over $\mathbb Z$ by $k$-periodicity and let $k^\prime$ be the minimal period of $z \mapsto i_z$. Without loss of generality, we can assume that $k = k^\prime$. We claim that property \ref{MainTheoOrderM-Spr} of Theorem~\ref{MainTheoOrderM} holds with $m = k$. Indeed, let $j_1, j_2 \in \llbracket 1, k\rrbracket$ be such that $(i_{j_1}, \dotsc, i_{j_1+k-1}) = (i_{j_2}, \dotsc, i_{j_2 + k - 1})$ and assume, to obtain a contradiction, that $j_1 \neq j_2$. Without loss of generality, $j_1 < j_2$. Set $k^{\prime\prime} = j_2 - j_1$ and notice that $0 < k^{\prime\prime} < k$ and $i_{j_1 + \ell} = i_{j_1 + k^{\prime\prime} + \ell}$ for every $\ell \in \llbracket 0, k-1\rrbracket$. Since $z \mapsto i_z$ is $k$-periodic, the previous equality holds for every $\ell \in \mathbb Z$, proving that $z \mapsto i_z$ is $k^{\prime\prime}$-periodic, contradicting the minimality of $k$ as period of $z \mapsto i_z$. Hence property \ref{MainTheoOrderM-Equality} of Theorem~\ref{MainTheoOrderM} holds, as required.
\end{proof}

\begin{remk}
Given $\mathcal A = (A_1, \dotsc, A_N)$, $\ell \in \mathbb N$, and a word $w = (i_1, \dotsc, i_\ell) \in \llbracket 1, N\rrbracket^\ell$, set $A(w) = A_{i_\ell} \dotsm A_{i_1}$ and let $\abs{w} = \ell$ be the length of $w$. Notice that, by proceeding similarly to the second part of the proof of Theorem~\ref{MainTheo}, we can construct, for every word $w$ of finite length, a Markov chain of order $\abs{w}$ with tensors $\nu_w$, $P_w$ such that $\rho(A(w))^{1/\abs{w}} = \rhop(\nu_w, P_w, \mathcal A)$. We deduce that
\[
\rhod(\mathcal A)=\sup_{w\text{ word of finite length}}\rho(A(w))^{1/\abs{w}} \le \sup_{m \in \mathbb N} \rhop(m, \mathcal A),
\]
where the equality is a consequence of the Joint Spectral Radius Theorem (see, e.g., \cite{Jungers2009Joint}). Since, moreover, $\rhop(m, \mathcal A)\le \rhod(\mathcal A)$ for every $m$, it follows that 
$\rhod(\mathcal A)=\sup_{m \in \mathbb N} \rhop(m, \mathcal A)$.

A further characterization of the equivalence in Corollary~\ref{CoroFinitenessProp} can then be stated as follows: an $N$-tuple of matrices $\mathcal A = (A_1, \dotsc, A_N)$ satisfies the finiteness property if and only if
\[
\sup_{m, \nu, P} \rhop(\nu, P, \mathcal A)
\]
is attained at some $(m, \nu, P)$, where the supremum is taken over all $(m, \nu, P)$ with $m \in \mathbb N$, $P$ a stochastic tensor of order $m+1$, and $\nu$ an invariant probability tensor for $P$.
\end{remk}

\bibliographystyle{abbrv}
\bibliography{Bib}

\end{document}